\newcommand{\cherry}{{%
\begin{tikzpicture}%

\tikzstyle{vertex} =[circle,draw,fill=black,thick, inner sep=0pt,minimum size= .2 mm]

\node [vertex] (x) at (0,0)  {};
\node [vertex] (y) at (-.06,-.1)  {};
\node [vertex] (z) at (0.06,-0.1)  {};

\draw (y) -- (x) -- (z);%
\end{tikzpicture}%
}}
\newcommand{\homsc}{{%
		\begin{tikzpicture}%
			
			\tikzstyle{vertex} =[circle,draw,fill=black,thick, inner sep=0pt,minimum size= .2 mm]
			
			\node [vertex] (x) at (0,0)  {};
			\node [vertex] (y) at (0.2,0)  {};
			
			\draw (x) to [bend right] (y);
			\draw (y) to [bend right] (x);%
		\end{tikzpicture}%
}}
\newcommand{\homsconeone}{{%
		\begin{tikzpicture}%
			\tikzstyle{vertex} =[circle,draw,fill=black,thick, inner sep=0pt,minimum size= .2 mm]
			
			\clip (-0.25,-0.1) rectangle (0.4, 0.1);
			
			\node [vertex] (x) at (0,0)  {};
			\draw (x) node [left=-0.8mm, font=\scriptsize] {$1$};
			\node [vertex] (y) at (0.2,0)  {};
			\draw (y) node [right=-0.8mm, font=\scriptsize] {$1$};
			
			\draw (x) to [bend right] (y);
			\draw (y) to [bend right] (x);%
		\end{tikzpicture}%
}}
\newcommand{\anyloop}{{%
		\begin{tikzpicture}%
			\tikzstyle{vertex} =[circle,draw,fill=black,thick, inner sep=0pt,minimum size= .3 mm]
			
			\clip (-0.22,-0.11) rectangle (0.03, 0.11);
			
			\node [vertex] (x) at (0,0)  {};
			
			\draw (x) arc (0:360:0.1);%
		\end{tikzpicture}%
}}
\newcommand{\mloop}{{%
		\begin{tikzpicture}%
			\tikzstyle{vertex} =[circle,draw,fill=black,thick, inner sep=0pt,minimum size= .3 mm]
			
			\clip (-0.22,-0.11) rectangle (0.3, 0.11);
			
			\node [vertex] (x) at (0,0)  {};
			\draw (x) node [right=-0.5mm, font=\scriptsize] {$m$};
			
			\draw (x) arc (0:360:0.1);%
		\end{tikzpicture}%
}}
\newcommand{\oneloop}{{%
		\begin{tikzpicture}%
			\tikzstyle{vertex} =[circle,draw,fill=black,thick, inner sep=0pt,minimum size= .3 mm]
			
			\clip (-0.22,-0.11) rectangle (0.19, 0.11);
			
			\node [vertex] (x) at (0,0)  {};
			\draw (x) node [right=-0.5mm, font=\scriptsize] {$1$};
			
			\draw (x) arc (0:360:0.1);%
		\end{tikzpicture}%
}}
\newcommand{\twoloop}{{%
		\begin{tikzpicture}%
			\tikzstyle{vertex} =[circle,draw,fill=black,thick, inner sep=0pt,minimum size= .3 mm]
			
			\clip (-0.22,-0.11) rectangle (0.2, 0.11);
			
			\node [vertex] (x) at (0,0)  {};
			\draw (x) node [right=-0.5mm, font=\scriptsize] {$2$};
			
			\draw (x) arc (0:360:0.1);%
		\end{tikzpicture}%
}}
\newcommand{\twotoone}{{%
		\begin{tikzpicture}%
			\tikzstyle{vertex} =[circle,draw,fill=black,thick, inner sep=0pt,minimum size= .3 mm]
			
			\clip (-0.22,-0.1) rectangle (0.35, 0.1);
			
			\node [vertex] (x) at (0,0)  {};
			\draw (x) node [left=-0.6mm, font=\scriptsize] {$2$};
			\node [vertex] (y) at (0.2,0) {};
			\draw (y) node [right=-0.8mm, font=\scriptsize] {$1$};

			\draw (x) to (y);
		\end{tikzpicture}%
}}
\newcommand{\twotostar}{{%
		\begin{tikzpicture}%
			\tikzstyle{vertex} =[circle,draw,fill=black,thick, inner sep=0pt,minimum size= .3 mm]
			
			\clip (-0.21,-0.1) rectangle (0.38, 0.1);
			
			\node [vertex] (x) at (0,0)  {};
			\draw (x) node [left=-0.6mm, font=\scriptsize] {$2$};
			\node [vertex] (y) at (0.2,0) {};
			\draw (y) node [right=-0.6mm, font=\scriptsize] {$\ast$};
			
			\draw (x) to (y);
		\end{tikzpicture}%
}}
\newcommand{\monetomtwo}{{%
		\begin{tikzpicture}%
			\tikzstyle{vertex} =[circle,draw,fill=black,thick, inner sep=0pt,minimum size= .3 mm]
			
			\clip (-0.5,-0.1) rectangle (0.6, 0.1);
			
			\node [vertex] (x) at (0,0)  {};
			\draw (x) node [left=-0.8mm, font=\scriptsize] {$m_1$};
			\node [vertex] (y) at (0.2,0) {};
			\draw (y) node [right=-0.8mm, font=\scriptsize] {$m_2$};
			
			\draw (x) to (y);
		\end{tikzpicture}%
}}
\newcommand{\closedgeod}{{%
		\begin{tikzpicture}%
			
			\tikzstyle{vertex} =[circle,draw,fill=black,thick, inner sep=0pt,minimum size= .3 mm]

			\clip (-0.12,-0.11) rectangle (0.13, 0.11);
			
			\node [vertex] (x) at (0.1,0)  {};
			
			\draw (-115:0.1) arc (-115:115:0.1);
			\draw[densely dotted] (115:0.1) arc (115:245:0.1);
			
		\end{tikzpicture}%
}}
  \newcommand{\calB}{\mathcal{B}}
  \newcommand{\calC}{\mathcal{C}}
  \newcommand{\calH}{\mathcal{H}}
  \newcommand{\calQ}{\mathcal{Q}}
  \newcommand{\EE}{\mathbb{E}}
  \newcommand{\NN}{\mathbb{N}}
  \newcommand{\PP}{\mathbb{P}}
  \newcommand{\RR}{\mathbb{R}}
  \newtheorem{theorem}{Theorem}[section]
  \newtheorem{proposition}[theorem]{Proposition}
  \newtheorem{lemma}[theorem]{Lemma}
  \newtheorem{question}[theorem]{Question}
  \theoremstyle{definition}
  \newtheorem{definition}[theorem]{Definition}
  \newtheorem*{claim*}{Claim}
  \newtheorem*{question*}{Question}
  \newtheorem*{answer*}{Answer}
  \newtheorem*{application*}{Application}
  \theoremstyle{remark}
  \newtheorem*{remark*}{Remark}
  \DeclareMathOperator{\Vol}{Vol}
  \DeclareMathOperator{\Cone}{Cone}
  \newcommand{\param}{{\mathchoice{\mkern1mu\mbox{\raise2.2pt\hbox{$
  \centerdot$}}
  \mkern1mu}{\mkern1mu\mbox{\raise2.2pt\hbox{$\centerdot$}}\mkern1mu}{
  \mkern1.5mu\centerdot\mkern1.5mu}{\mkern1.5mu\centerdot\mkern1.5mu}}}
  \renewcommand{\setminus}{{\smallsetminus}}
  \newcommand{\ST}{\mathbin{\Big|}} 
  \newcommand{\from}{\colon\thinspace} 
  \newcommand{\ep}{\epsilon}
  \newcommand{\One}{{\mathbbm{1}}}
  \newcommand{\Hgen}{{\calH_{\rm Gen}}}
  \newcommand{\Hng}{{\calH_{\rm NG}}}
  \newcommand{\sys}{{\rm sys}}
\begin{document}


  \title [Lengths of saddle connections]
   {Lengths of saddle connections \\ on random translation surfaces of large genus}
  

 \author   {Howard Masur}
 \address{Department of Mathematics, University of Chicago, Chicago, Il.}
 \email{masur@math.uchicago.edu}

 \author   {Kasra Rafi}
 \address{Department of Mathematics, University of Toronto, Toronto, ON }
 \email{rafi@math.toronto.edu}
 
 \author   {Anja Randecker}
 \address{Institute for Mathematics, Heidelberg University, Heidelberg}
 \email{randecker@mathi.uni-heidelberg.de}
 
 \date{September 8, 2025}

\begin{abstract} 
We determine the distribution of the number of saddle connections on a random translation surface of large genus.
More specifically, for genus $g$ tending to infinity, the number of saddle connections with lengths in a given interval $[\sfrac{a}{g}, \sfrac{b}{g}]$ converges in distribution to a Poisson distributed random variable. Furthermore, the numbers of saddle connections associated to disjoint intervals of lengths are independent.
\end{abstract}
  
  \maketitle

\section{Introduction}
We study the distribution of short saddle connections on a random large-genus translation surface. 
This is part of a continuing effort to understand the geometry of a random surface as the genus of 
the surface tends to infinity. Much is known about the shape of random hyperbolic surfaces
but most questions about random translation surfaces remain open. Here, we aim to emulate 
the celebrated results of Mirzakhani--Petri \cite{mirzakhani_petri_19} regarding the number of short 
curves in a random large-genus hyperbolic surface. 

The space of translation surfaces of genus $g$ can be identified with the space of abelian differentials 
$(X, \omega)$ where $X$ is a Riemann surface of genus $g$ and $\omega$ is a holomorphic $1$--form
on $X$. The space of abelian differentials can be decomposed into strata depending on the number and the 
order of zeros of $\omega$. Let $\calH_g=\calH_g(1,1, \dots, 1)$ be the principal stratum of unit-area abelian differentials on 
a closed surface of genus $g$ where there are~$2g-2$ zeros of order~$1$. 
 We equip the stratum with the normalized Lebesgue measure $\Vol(\param)$ as in~\cite{Masur-82, Veech-82}. 
 Then $\calH_g$ has a finite total volume \cite{Masur-82, Veech-82} and we can define the probability of a measurable subset $E \subseteq \calH_g$ by 
\[
\PP_g(E) = \frac{\Vol(E)}{\Vol(\calH_g)}. 
\]

Since the total area of a surface in $\calH_g$ is $1$ and the number of zeros of an abelian differential in $\calH_g$ is 
$2g-2$, the expected number of saddle connections of length at most $\ep$ on a random translation surface in 
$\calH_g$ tends to infinity as $g \to \infty$. Therefore, we need to choose the right scale at which the expected number 
of saddle connections is a finite, positive real number.  We show that this correct scale is 
$\sfrac 1g$ in the following sense.
Given $(X, \omega) \in \calH_g$ and an interval $[a, b] \subseteq \RR_+$, let $N_{g,[a,b]}(X,\omega)$ denote  the number of saddle connections 
on $(X, \omega)$ with lengths in the interval~$\left[ \sfrac ag, \sfrac bg \right]$.

\begin{theorem}  \label{Thm:Main}
Let $[a_1, b_1], [a_2, b_2], \dots, [a_k, b_k] \subseteq \RR_+$ be disjoint intervals. Then, as
$g \to \infty$, the vector of random variables 
\[
\left( N_{g,[a_1,b_1]},\dots ,N_{g,[a_k,b_k]} \right) \from \calH_g \to \NN_0^k
\]
converges jointly in distribution to a vector of random variables with Poisson distributions of means 
$\lambda_{[a_i,b_i]}$, where
\[
\lambda_{[a_i, b_i]}= 8\pi(b_i^2-a_i^2)
\]
 for $i = 1,\dots,k$. That is, 
\[
\lim_{g \to \infty} \PP_g \left( N_{g,[a_1,b_1]} = n_1,..., N_{g,[a_k,b_k]} = n_k \right) = 
\prod_{i=1}^k \frac{\lambda_{[a_i, b_i]}^{n_i} e^{-\lambda_{[a_i,b_i]}}}{n_i!}.
\]
 \end{theorem}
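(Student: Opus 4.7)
The plan is to prove convergence in distribution via the method of factorial moments. If $Y_1, \ldots, Y_k$ are independent Poisson random variables with means $\lambda_1, \ldots, \lambda_k$, then their joint factorial moments satisfy $\EE\bigl[\prod_i (Y_i)_{n_i}\bigr] = \prod_i \lambda_i^{n_i}$, where $(Y)_n = Y(Y-1)\cdots(Y-n+1)$. Since these joint factorial moments determine the distribution of a vector of independent Poissons, and since for bounded intervals and fixed $g$ all moments of $N_{g,[a,b]}$ are finite, it suffices to show that for every choice of disjoint intervals $[a_1, b_1], \ldots, [a_k, b_k]$ and every tuple $(n_1, \ldots, n_k) \in \NN_0^k$,
$$\lim_{g \to \infty} \EE_g\biggl[\prod_{i=1}^k (N_{g,[a_i,b_i]})_{n_i}\biggr] = \prod_{i=1}^k \lambda_{[a_i,b_i]}^{n_i}.$$

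To compute these moments, I would use a Siegel--Veech style unfolding. The quantity $(N_{g,[a,b]})_n$ counts ordered $n$-tuples of pairwise distinct saddle connections whose lengths lie in $[a/g, b/g]$, so integrating over $\calH_g$ produces an integral over configurations of $n$ labelled holonomy vectors. For $n = 1$, the classical Siegel--Veech formula of Eskin--Masur gives $\EE_g[N_{g,[a,b]}] = c_{\mathrm{sc}}(\calH_g) \cdot \pi (b^2 - a^2) / g^2$, where $c_{\mathrm{sc}}(\calH_g)$ is the saddle connection Siegel--Veech constant; the large genus asymptotic $c_{\mathrm{sc}}(\calH_g) \sim 8 g^2$ (a consequence of the recent work on large-genus volume asymptotics of strata) then yields exactly the predicted mean $\lambda_{[a,b]} = 8\pi(b^2 - a^2)$. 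For $n \geq 2$, one organizes the $n$-tuples by combinatorial type: the \emph{generic} configurations are those in which the $n$ saddle connections are pairwise disjoint on the surface and have $2n$ distinct zero endpoints, while the \emph{degenerate} configurations involve shared endpoints, parallel or collinear holonomies that bound polygonal regions, or similar coincidences. For the generic configurations, an inductive unfolding---integrating out one saddle connection at a time, which effectively cuts the surface along it and reduces the problem to a Siegel--Veech integral in a lower dimensional stratum---yields a product of $n$ independent Siegel--Veech contributions, so the generic contribution tends to $\lambda_{[a,b]}^n$. For joint factorial moments across several disjoint intervals, the same unfolding factors over the intervals, since no single holonomy lies in two length ranges simultaneously.

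The central obstacle is to show that the degenerate configurations contribute negligibly in the large genus limit. Each degeneracy forces the $n$-tuple of saddle connections to live in a lower dimensional sub-locus of configuration space: sharing a zero means the surface obtained after cutting lies in a boundary stratum with fewer marked points, while additional linear relations among the holonomies impose further constraints decreasing the dimension. The key estimate is to bound the volumes of the corresponding sub-strata, relative to $\Vol(\calH_g)$, by quantities that absorb the compensating factor $g^{-2n}$ coming from the length scaling and still tend to zero as $g \to \infty$. This should follow from the recent precise large-genus asymptotics for volumes of strata of abelian differentials, combined with the Eskin--Masur--Zorich formalism that expresses Siegel--Veech constants for configurations of saddle connections as volume ratios summed over combinatorial configuration types. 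I expect this volume-estimate step, rather than the unfolding itself, to be the main technical difficulty; it is the translation surface analogue of ruling out filling configurations of short simple closed curves in the Mirzakhani--Petri argument in \cite{mirzakhani_petri_19}.
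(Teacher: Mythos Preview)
Your outline is correct and follows the same high-level strategy as the paper: reduce to joint factorial moments, separate generic $K$-tuples of short saddle connections (those with $2K$ distinct simple-zero endpoints and no coincidences) from degenerate ones, and kill the degenerate contribution with large-genus volume asymptotics. The execution differs in two ways worth noting. First, the surgery is not ``cutting'' along a saddle connection (which would create boundary) but \emph{collapsing} it: the two order-$1$ endpoints are merged into a single zero of order~$2$, keeping the surface closed and the genus fixed while moving to the stratum $\calH_g(2,1,\ldots,1)$. Second, the paper does this for all $K$ saddle connections \emph{simultaneously}, obtaining a single $3^K$-to-$1$ locally measure-preserving map
\[
\widehat\calH \;\longrightarrow\; M_{r_1,\dots,r_k}\times \calH_g(2,\dots,2,1,\dots,1)\times \prod_i A_{a_i,b_i}^{r_i},
\]
so that the factorial moment is read off from one volume ratio $\Vol(\calH')/\Vol(\calH_g)\sim(4/3)^K$, the combinatorial count $|M|\sim(2g)^{2K}$, the multiplicity~$3^K$, and a $2^{-K}$ for forgetting orientations; these combine to $8^K\prod_i(\pi(b_i^2-a_i^2))^{r_i}$. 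Your inductive one-at-a-time scheme can be made to work, but the phrase ``$n$ independent Siegel--Veech contributions'' is not literally correct: after each collapse you are in a new stratum with different Siegel--Veech constants and fewer simple zeros, and the bookkeeping through the chain $\calH_g(1,\dots,1)\to\calH_g(2,1,\dots,1)\to\cdots$ must be done explicitly to recover the product. The paper's treatment of the degenerate locus is exactly what you anticipate: a non-generic set $\calH_{\rm NG}$ (surfaces with a short loop or a short ``cherry'' of two saddle connections sharing a zero) is shown to have relative volume $O(1/g)$ via Siegel--Veech bounds and a secondary collapse, and one then passes from $\calH_{\rm Gen}$ back to $\calH_g$.
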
 

\cref{Thm:Main} has the following consequence: Let $l_{\rm min}(X, \omega)$ be the length of the shortest saddle connection on $(X, \omega)$. Then
\begin{equation*} 
\lim_{g \to \infty} \PP_g \left( l_{\rm min} < \sfrac{\ep}{g} \right)
= 1 - \PP \left( N_{g,[0,\ep]} = 0 \right)
= 1 - e^{-\lambda_{[0,\ep]}}
\approx
 8\pi \ep^2
 \quad \text{as } \ep \to 0
 .
\end{equation*}

This is in line with the fact that for a fixed stratum $\calH$ of unit-area translation surfaces and a small $\ep > 0$, the thin part $\calH_{\rm thin}^{\ep} \subseteq \calH$ which contains all translation surfaces with a saddle connection of length at most~$\ep$ satisfies
\begin{equation*}
	\Vol(\calH_{\rm thin}^{\ep} ) = O(\ep^2) \cdot \Vol(\calH)
	.
\end{equation*}
For one short saddle connection, this can be proven using Siegel--Veech constants. However, also for the set of translation surfaces with two short saddle connections, there exists a similar result on asymptotics by \cite{masur_smillie_91} which can be made more concrete with \cref{Thm:Main}.

\subsection*{Other strata}
For a given genus, the principal stratum has the highest dimension and includes all other strata for that genus on 
its boundary. Thus, although \cref{Thm:Main} is stated for the principal stratum, it also applies to the space of all 
unit-area translation surfaces of a given genus. For non-principal strata, the result does not hold universally; 
however, in certain cases, similar conclusions can be drawn, which we now explore.

The first variation is for strata where the number of non-simple zeros is growing slow enough.

\begin{theorem} \label{thm:slow-growing_exceptions}
	Let $f, \ell \colon \NN \to \RR$ be functions such that $\frac{f(g)^2 \cdot \ell(g)}{g} \to 0$ for $g\to \infty$. If $\mathcal{H}_g$ is replaced by the stratum $\calH_g(m_1, m_2, \ldots, m_{\ell(g)}, 1, \ldots, 1)$ of unit-area translation surfaces of genus~$g$ with at most $\ell(g)$ zeros which are not simple and $m_1,\ldots, m_{\ell(g)} \leq f(g)$, then the same statement as in \cref{Thm:Main} is true for the same $\lambda_{[a_i, b_i]} = 8\pi (b_i^2 - a_i^2)$.
\end{theorem}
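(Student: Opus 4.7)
The plan is to run the proof of \cref{Thm:Main} essentially verbatim, carefully tracking how inserting up to $\ell(g)$ non-simple zeros (each of order at most $f(g)$) perturbs each ingredient. Since a Poisson distribution is determined by its moments, joint convergence of the vector follows from the standard method of factorial moments: it suffices to show that for every tuple $(k_1, \ldots, k_r) \in \NN_0^r$ and disjoint intervals $I_i = [a_i, b_i]$,
\[
\EE_g\!\left[\prod_{i=1}^r \binom{N_{g, I_i}}{k_i}\right] \xrightarrow[g \to \infty]{} \prod_{i=1}^r \frac{\lambda_{I_i}^{k_i}}{k_i!}.
\]
Each such factorial moment counts, up to symmetry, ordered $k$-tuples of pairwise distinct saddle connections whose lengths fall in the prescribed intervals, where $k = \sum_i k_i$.

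The Siegel--Veech integration formula expresses each factorial moment as a sum over combinatorial configurations $\calC$ consisting of $k$ disjoint saddle connections with an assignment of their $2k$ endpoints to zeros of $\omega$. The contribution of $\calC$ factors as a combinatorial prefactor (counting endpoint assignments) times $\Vol(\calH_\calC)/\Vol(\calH_g(\mu))$ times the area of the prescribed length region in $\CC^k$, where $\calH_\calC$ is the boundary stratum obtained by collapsing $\calC$. I would split the configurations into \emph{main} ones, in which every endpoint is a simple zero, and \emph{error} ones, in which at least one endpoint is non-simple. The main configurations are structurally identical to those analyzed for the principal stratum; the combinatorial prefactor is asymptotic to $(2g)^{2k}$ because the number of simple zeros is $2g - 2 - \sum m_j = 2g(1 + o(1))$ under the hypothesis $f(g)\ell(g) = o(g)$, and the corresponding boundary-stratum volumes match those of the principal stratum to leading order. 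Combined with the volume comparison discussed below, the main configurations contribute exactly $\prod_i \lambda_{I_i}^{k_i}/k_i!$ in the limit.

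It remains to show that the error configurations contribute $o(1)$ in total. A configuration placing $s \geq 1$ of its $2k$ endpoints at non-simple zeros has $O\bigl(\ell(g)^s f(g)^{O(s)} g^{2k - s}\bigr)$ choices of endpoints, since each non-simple zero carries $O(f(g))$ outgoing prong directions. After normalizing by the main-term count of order $g^{2k}$ and accounting for the volumes $\Vol(\calH_\calC)$, which differ from their principal-type analogues by a multiplicative factor of size $f(g)^{O(s)}$, the relative contribution of such configurations is bounded by a positive power of $f(g)^2 \ell(g)/g$, which is $o(1)$ under the hypothesis. Summing over the finitely many configuration types closes the argument.

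The main obstacle is the volume comparison. One needs an estimate of the form
\[
\frac{\Vol(\calH_g(m_1, \ldots, m_\ell, 1, \ldots, 1))}{\Vol(\calH_g(1, \ldots, 1))} = 1 + O\!\left(\tfrac{f(g)^2 \ell(g)}{g}\right),
\]
together with analogous bounds for every boundary stratum $\calH_\calC$ arising from collapsing a bounded-size configuration, all uniformly as $g \to \infty$. Such estimates should follow from Aggarwal's large-genus volume asymptotics for strata, or from the Eskin--Okounkov formulae, but pinning down the exact dependence on the order vector $(m_1, \ldots, m_\ell)$ and verifying that the hypothesis $f(g)^2 \ell(g)/g \to 0$ precisely absorbs the accumulated error through all boundary strata in the Siegel--Veech decomposition is the technical heart of the proof.
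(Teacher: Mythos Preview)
Your instinct to separate configurations into ``all endpoints simple'' versus ``at least one non-simple endpoint'' is correct and is exactly how the paper proceeds: it simply enlarges the non-generic set $\calH_{\rm NG}$ to include every surface carrying a saddle connection of length at most $B/g$ incident to a non-simple zero, and bounds the volume of this extra piece via the Siegel--Veech constant \eqref{Eq:sc}, getting
\[
(f(g)+1)^2 \cdot \ell(g)\cdot g \cdot (B/g)^2 = O\!\left(\frac{f(g)^2\,\ell(g)}{g}\right) \to 0.
\]
After that, the computation on the generic set is literally the same as in \cref{Prop:E}.

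Where your proposal goes wrong is the ``main obstacle'' you identify. The ratio
\[
\frac{\Vol\bigl(\calH_g(m_1,\ldots,m_\ell,1,\ldots,1)\bigr)}{\Vol\bigl(\calH_g(1,\ldots,1)\bigr)}
\]
is, by \eqref{eq:volume_stratum}, asymptotic to $\prod_{i=1}^{\ell} 2^{m_i}/(m_i+1)$, which is \emph{not} $1+o(1)$ (already $4/3$ for a single double zero) and can grow without bound. Fortunately this comparison is irrelevant: the factorial moment is computed entirely \emph{within} the stratum $\calH \coloneqq \calH_g(m_1,\ldots,m_\ell,1,\ldots,1)$. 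Collapsing $K$ pairs of simple zeros lands in the stratum $\calH''$ with $K$ extra double zeros, and the ratio that actually enters is
\[
\frac{\Vol(\calH'')}{\Vol(\calH)} \;\sim\; \frac{\prod(m_i+1)\cdot 2^{\,2g-2-\sum m_i}}{\prod(m_i+1)\cdot 3^{K}\cdot 2^{\,2g-2-\sum m_i-2K}} \;=\; \left(\frac{4}{3}\right)^{K},
\]
identical to the principal case because the non-simple factors cancel. Since the number of simple zeros is $2g-2-\sum m_i = 2g\bigl(1+o(1)\bigr)$ (using $f(g)\ell(g)\le f(g)^2\ell(g)=o(g)$), the combinatorial count $|M_{r_1,\ldots,r_k}|$ and the rest of \cref{Prop:E} go through verbatim. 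So the step you flag as the technical heart is a non-issue, and the whole argument reduces to the single Siegel--Veech estimate above.
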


We can also consider strata where none of the zeros is simple but all zeros have the same, fixed~order. 

\begin{theorem} \label{thm:order_m}
	Fix $m\in \NN$ and consider only such $g\in \NN$ for which $2g-2$ is divisible by~$m$.	
	If $\mathcal{H}_g$ is replaced by the stratum $\calH_g(m, m, \dots, m)$ of unit-area translation surfaces of genus~$g$ where there are $\frac{2g-2}{m}$ zeros of order $m$, then the same statement as in \cref{Thm:Main} is true with means
	\[
	\lambda_{[a_i, b_i]}= \left( \frac{m+1}{m}\right)^2 \cdot 2\pi(b_i^2-a_i^2)
	\]
	for $i = 1,\dots,k$. 
\end{theorem}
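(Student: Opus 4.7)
The plan is to follow the proof of Theorem 1.1 and track where the combinatorics of the stratum enter. By the method of moments, convergence of $(N_{g,[a_i,b_i]})_{i=1}^k$ to a vector of independent Poisson random variables is equivalent to convergence of all joint factorial moments,
\[
\EE_g\!\left[\prod_{i=1}^k \binom{N_{g,[a_i,b_i]}}{n_i}\right] \longrightarrow \prod_{i=1}^k \frac{\lambda_{[a_i,b_i]}^{n_i}}{n_i!}.
\]
Since the intervals $[a_i,b_i]$ are disjoint, these are sums over unordered tuples of distinct saddle connections with prescribed length windows, and the Siegel--Veech transform expresses each such sum as an integral against a volume form on $\calH_g(m, \ldots, m)$.

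For the first moment I would apply the single-saddle-connection Siegel--Veech formula, reducing to the computation of $c_{sv}(\calH_g(m, \ldots, m))$ as $g \to \infty$. By the Eskin--Masur--Zorich principal boundary formula, this constant decomposes into contributions from the ways a single saddle connection can shrink. The dominant configuration is a saddle connection joining two distinct zeros of order $m$ collapsing to a single zero of order $2m$, contributing a term proportional to
\[
\binom{(2g-2)/m}{2}\cdot (m+1)^2 \cdot \frac{\Vol(\calH_g(2m, m, \ldots, m))}{\Vol(\calH_g(m, m, \ldots, m))},
\]
where the $(m+1)^2$ tracks the cone angles $2\pi(m+1)$ at the two endpoints and the binomial counts the choice of pair. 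Self-loops at a single zero and more exotic degenerations yield lower-order terms because they involve boundary strata of higher codimension (and lower genus). Inserting Aggarwal's large-genus volume asymptotics for $\calH_g(m, m, \ldots, m)$ and $\calH_g(2m, m, \ldots, m)$ produces the stated prefactor $\bigl(\frac{m+1}{m}\bigr)^2 \cdot 2\pi$: the $(m+1)^2$ arises from the cone angles, while the $1/m^2$ comes from having only $(2g-2)/m$ zeros rather than $2g-2$. The $m=1$ check is reassuring: $(1+1)^2 \cdot 2\pi = 8\pi$, matching Theorem 1.1.

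For $k > 1$ and for the joint moments, I would decompose the count of $k$-tuples of saddle connections by combinatorial type -- which zeros each sc joins, which sc's share an endpoint, whether any are parallel, and so on. The generic type, in which the $k$ saddle connections are pairwise disjoint and use $2k$ distinct zeros, contributes the product $\prod_i \lambda_{[a_i,b_i]}^{n_i}/n_i!$ in the limit, by the same Siegel--Veech-plus-volume-asymptotic argument as in the first moment; disjointness of the intervals ensures no mixing between the $N_{g,[a_i,b_i]}$. All other combinatorial types are lower order in $g$: they use fewer distinct zeros and involve boundary strata whose volume ratios carry extra powers of $1/g$.

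The main obstacle is precisely this last step, which I expect to be the heaviest part: controlling the non-generic configurations uniformly. This requires large-genus volume asymptotics for a range of boundary strata of $\calH_g(m, \ldots, m)$, such as $\calH_g(2m, 2m, m, \ldots, m)$ and $\calH_g(3m, m, \ldots, m)$, together with combinatorial bounds on the number of $k$-tuples realizing each boundary type. Once these asymptotics are in place -- they should follow uniformly from Aggarwal's large-genus volume formulas -- the Poisson convergence argument from the principal-stratum case adapts essentially mechanically, with the scalar $(m+1)^2/m^2$ replacing the principal-stratum value $4$ in every single-sc contribution.
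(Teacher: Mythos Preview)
Your overall strategy matches the paper's: method of moments, identify the dominant configuration (disjoint saddle connections joining $2K$ distinct zeros), and show all other configurations are lower order. Your heuristic for the constant $\bigl(\frac{m+1}{m}\bigr)^2 \cdot 2\pi$ is also correct.

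There is a genuine gap, however, in how you propose to compute the leading term for the \emph{higher} factorial moments. The Siegel--Veech formula is a statement about a \emph{single} saddle connection; it does not directly evaluate
\[
\int_{\calH_g(m,\ldots,m)} \#\bigl\{\text{ordered $K$-tuples of disjoint saddle connections with prescribed lengths}\bigr\}.
\]
Saying this follows ``by the same Siegel--Veech-plus-volume-asymptotic argument as in the first moment'' is exactly where the real work hides. The paper's device here is an explicit \emph{simultaneous collapsing map}: on a carefully defined generic set $\Hgen$ (surfaces with no short loop and no two short saddle connections sharing a zero), one collapses all $K$ saddle connections at once to obtain a locally measure-preserving, $(2m+1)^K$-to-$1$ map
\[
\Phi \colon \widehat\calH \;\longrightarrow\; M_{r_1,\dots,r_k} \times \calH_g(2m,\ldots,2m,m,\ldots,m) \times \prod_{i=1}^k A_{a_i,b_i}^{r_i},
\]
so that the factorial moment becomes a product of the target-stratum volume, the combinatorial factor $|M_{r_1,\dots,r_k}| = \bigl(\tfrac{2g-2}{m}\bigr)!\big/\bigl(\tfrac{2g-2}{m}-2K\bigr)!$, and the annulus areas. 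No higher Siegel--Veech formula is invoked for the main term; the Siegel--Veech constants appear only in the \emph{error} estimates, to bound $\Vol(\Hng)/\Vol(\calH_g)$.

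Relatedly, you invert where the difficulty lies. The non-generic bounds are the \emph{lighter} part --- single-saddle-connection Siegel--Veech estimates suffice, exactly as in the $m=1$ case, and the argument is essentially unchanged. The substantive step is constructing $\Phi$ on $\Hgen$, checking that the $m$ ghost doubles of each saddle connection stay disjoint on the generic set, and verifying that $\Phi$ is $(2m+1)^K$-to-$1$ and essentially surjective onto the generic part of the target. This is where the $m$-dependence enters concretely, producing $(2m+1)^K$ in place of $3^K$ and ultimately the factor $\bigl(\frac{m+1}{m}\bigr)^2$.
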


Other slight variations of the theorem can also be derived with the same method.
However, our method does not work for the minimal stratum $\calH_g(2g-2)$ when there is only one zero. The reason is that for a surface in the minimal stratum, every saddle connection is  a closed loop and cannot be collapsed. However, the scale $\sfrac 1g$ 
still seems to be correct. 
\begin{question} 
	Is there  an analogue of \cref{Thm:Main} for the space $\calH_g(2g-2)$? 
\end{question}

\subsection*{History and tools}

Large-genus asymptotics in the context of surfaces have been studied for some years primarily for hyperbolic surfaces. Mirzakhani started a program in~\cite{mirzakhani_13} in which she determined the expected systole, Cheeger constant, diameter, and other geometric properties of random surfaces of large genus.
Independently, Guth--Parlier--Young~\cite{guth_parlier_young_11} considered geometric invariants related to pants decompositions of random hyperbolic surfaces.
Subsequently, many properties have been studied such as lengths of separating curves \cite{nie_wu_xue_23} or the first non-zero Laplacian eigenvalue \cite{wu_xue_22, lipnowski_wright_24, anantharaman_monk_24}.
The inspiration for the article at hand is the work of Mirzakhani--Petri \cite{mirzakhani_petri_19} in which they determined the distribution of the number of short curves in a random hyperbolic surface of large genus.

For translation surfaces, less is still known than for hyperbolic surfaces.
However, for the volumes of strata of translation surfaces, there has been a recent increase in results. Based on work of Eskin--Okounkov \cite{eskin_okounkov_01}, Eskin--Zorich predicted in \cite{eskin_zorich_15} the volumes of strata of translation surfaces and Aggarwal confirmed these \cite{aggarwal_20}. A more precise error term for the volume estimates has been determined by Chen--Möller--Sauvaget--Zagier~\cite{chen_moeller_sauvaget_zagier_20}. Also for half-translation surfaces, first results have been obtained by Chen--Möller--Sauvaget~\cite{chen_moeller_sauvaget_23}.

By work of Eskin--Masur--Zorich \cite{eskin_masur_zorich_03}, volumes of strata are strongly related to Siegel--Veech constants (see \cref{sec:Siegel-Veech} for definitions and more details). 
Asymptotics of Siegel--Veech constants for large genus have been calculated by many different authors in recent years, e.g.\  Chen--Möller--Zagier \cite{chen_moeller_zagier_18} for principal strata, Sauvaget \cite{sauvaget_18} for minimal strata, Zorich in an appendix to \cite{aggarwal_20} and Aggarwal \cite{aggarwal_19} for all strata of translation surfaces, and Chen--Möller--Sauvaget--Zagier \cite{chen_moeller_sauvaget_zagier_20} for principal strata of half-translation surfaces.

Furthermore, also geometric properties of random translation surfaces of large genus have been studied: An upper bound for the covering radius is given in \cite{masur_rafi_randecker_22} and Delecroix--Goujard--Zograf--Zorich have determined geometric and combinatorial properties for large-genus square-tiled surfaces in a series of articles, see \cite{delecroix_goujard_zograf_zorich_22, delecroix_goujard_zograf_zorich_23} and the references therein. In an upcoming work, Bowen--Rafi--Vallejos show that a sequence of random translation surfaces with area equal to genus Benjamini--Schramm converges as genus tends to infinity, see the research announcement \cite{bowen_rafi_vallejos_25}.

For the proof of our main theorem, we will strongly use the recent works on large-genus asymptotics for volumes of strata and for Siegel--Veech constants that are outlined above.

\subsection*{Further questions and remarks}
Let $\sys(X, \omega)$ be the length of the shortest closed geodesic in~$(X, \omega)$.
To examine $\sys$, we need to work at a different scale as the expected number of closed geodesics of length $\sfrac 1g$ converges to zero. 
It turns out that the correct scale for this setup is~$\sfrac 1{\sqrt g}$. Let $L_{g,[a,b]}(X,\omega)$ denote  the number of closed geodesics on~$(X, \omega)$ with lengths in the interval $\left[ \sfrac a{\sqrt g}, \sfrac b{\sqrt g} \right]$.
\begin{question} \label{question:closed_geodesics}
	Does $L_{g,[a,b]}(X,\omega)$ converge in distribution to a random variable with a Poisson distribution?
\end{question}  

We can not directly apply the same methods as in this paper, again because our methods involve the collapsing of saddle connections which does not work for closed geodesics. 

Another natural setting is the space of quadratic differentials (or half-translation surfaces). 
Let $\calQ_g= \calQ_g(1,1, \dots, 1)$ be the principal stratum of unit-area quadratic differentials. 
Again the correct scale seems to be $\sfrac 1g$. 

\begin{question} 
Does an analogue of \cref{Thm:Main} and/or an analogue of \cref{question:closed_geodesics} hold for the space $\calQ_g$? 
\end{question} 
Again, our methods fail for the analogue of \cref{Thm:Main} since collapsing a saddle connection in a quadratic differential is not a local construction. Furthermore, much less  is
known about the volumes of strata of quadratic differentials. 

\subsection*{Outline of the paper}
In the next four sections, we recall the background needed for our arguments:
In \cref{sec:moments}, we describe the method of moments which is used to deduce that a variable is Poisson distributed. In \cref{sec:translation_surfaces_strata}, we introduce translation surfaces and their strata. The method of Siegel--Veech constants and some relevant values of them are collected in \cref{sec:Siegel-Veech}. In \cref{sec:collapsing}, we recall the construction for collapsing a saddle connection and describe a variation of it to collapse two saddle connections sharing a zero.

We start the proof of \cref{Thm:Main} in \cref{sec:generic} by defining a generic subset of the stratum, on which we describe a simultaneous collapsing procedure in \cref{sec:collapsing_simultaneously}. This construction is used in \cref{sec:length_spectrum} to determine the factorial moments of $N_{g,[a,b]}$ on the generic subset. In \cref{sec:finish}, we explain how to extend the result from the generic subset to the whole stratum.

We finish with arguments to prove the variations of the main theorem in \cref{sec:variations}.

\subsection*{Acknowledgements}
The authors are very grateful to Anton Zorich for many helpful conversations during the work on this project as well as for factors of $2$ and detailed comments on an earlier draft of this work.

K.~R.\ was partially supported by NSERC Discovery Grant RGPIN 05507 and the Simons Fellowship Program. The work of A.~R.\ was partially funded by the Deutsche Forschungsgemeinschaft
(DFG, German Research Foundation) -- 441856315.

\section{The method of moments} \label{sec:moments}
 Similar to the proof in \cite{mirzakhani_petri_19}, we use the method of moments for showing that a random variable is Poisson distributed.
 Recall that a random variable $N \from \Omega \to \NN_0$ on a probability space $(\Omega, \PP)$ is said to be \emph{Poisson distributed with mean~$\lambda \in (0, \infty)$} if
 \[
 \PP ( N=k ) = \frac{\lambda^k e^{-\lambda}}{k!} \qquad \text{for all $k \in \NN_0$}. 
 \]
 The moments of a Poisson distributed random variable are slightly complicated terms, so we instead consider the factorial moments here.
 
Given a random variable $N \from \Omega \to \NN_0$ and $r \in \NN$, we define the random variable
\[
(N)_r =N(N-1)\dots(N-r+1).
\]
If its expectation $\EE (N)_r$ exists, it is called the \emph{$r$--th factorial moment} of $N$.

The $r$--th factorial moment of a Poisson distributed variable with mean $\lambda$ is equal to $\lambda^r$ for all $r \in \NN$. 
It turns out that this is an if and only if condition. 

\begin{theorem}[The method of moments {\cite[Theorem 1.23]{bollobas_01}}] \label{Thm:moments}
Let $\{(\Omega_i,\PP_i)\}_{i\in\NN}$ be a sequence of probability spaces. For $k \in \NN$, 
let $N_{1,i}, \dots , N_{k,i} \from  \Omega_i \to \NN_0$ be random variables for all $i\in \mathbb{N}$ and suppose 
there exist $\lambda_1, \dots , \lambda_k \in (0, \infty)$ such that
\[
\lim_{i\to \infty} \EE ( (N_{1,i})_{r_1} \dots (N_{k,i})_{r_k} ) 
  = \lambda_1^{r_1}\dots \lambda_k^{r_k}
\]
for all $r_1,\dots, r_k \in \NN$.  Then
\[
\lim_{i \to \infty} \PP ( N_{1,i} = n_1,...,N_{k,i} = n_k ) = 
\prod_{j=1}^k \frac{\lambda_j^{n_j} e^{-\lambda_j}}{n_j!}
\]
for all $n_1,\dots , n_k \in \NN$. In other words, the vector 
$(N_{1,i},\dots,N_{k,i}) \from \Omega_i \to \NN_0^k$ converges jointly in distribution to a vector 
which is independently Poisson distributed with means~$\lambda_1,\dots,\lambda_k$.
\end{theorem}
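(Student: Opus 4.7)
The plan is to prove joint convergence in distribution via joint probability generating functions (PGFs), invoking the continuity theorem for $\NN_0^k$-valued random vectors. For each $i$ and $(s_1, \dots, s_k) \in [0,1]^k$, set
\[
G_i(s_1, \dots, s_k) = \EE_i \Bigl[ \prod_{j=1}^k s_j^{N_{j,i}} \Bigr].
\]
Since the joint PGF of $k$ independent Poisson random variables with means $\lambda_1, \dots, \lambda_k$ is $\prod_{j=1}^k e^{\lambda_j(s_j-1)}$, and since pointwise convergence of PGFs on $[0,1]^k$ characterizes joint convergence in distribution for integer-valued vectors, it suffices to show $G_i \to \prod_j e^{\lambda_j(s_j-1)}$ pointwise.

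First I would expand each factor via $s_j^{N_{j,i}} = (1 + (s_j - 1))^{N_{j,i}}$ by the binomial theorem and take expectations. Using $\binom{N}{r} = (N)_r/r!$, this yields the formal identity
\[
G_i(s_1, \dots, s_k) = \sum_{r_1, \dots, r_k \geq 0} \frac{\EE_i \bigl[(N_{1,i})_{r_1} \cdots (N_{k,i})_{r_k}\bigr]}{r_1! \cdots r_k!} \prod_{j=1}^k (s_j - 1)^{r_j}.
\]
The target $\prod_j e^{\lambda_j(s_j-1)}$ admits the identical expansion with the factorial moments replaced by $\lambda_1^{r_1} \cdots \lambda_k^{r_k}$, so by the hypothesis every coefficient converges to the correct value. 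In particular, the finite-order truncations of the PGFs converge term by term.

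The main obstacle, and the technical heart of the argument, is justifying the interchange of the limit $i \to \infty$ with the infinite sum over $r_1, \dots, r_k$. The plan is a Bonferroni-style squeeze: because $s_j - 1 \in [-1, 0]$ on $[0,1]^k$ the factors $(s_j-1)^{r_j}$ alternate in sign, and because the factorial moments $(N_{j,i})_{r_j}$ are non-negative, the truncated partial sums over $r_1, \dots, r_k \leq R$ alternately upper- and lower-bound both $G_i$ and the Poisson target, depending on the parity of $R$. Letting first $i \to \infty$ (using only the convergence of finitely many factorial moments) and then $R \to \infty$ (using the absolute convergence of $\sum_r \lambda_j^r/r!$ for the Poisson series) pinches the two sides together. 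An equivalent and perhaps cleaner route is to apply the inclusion--exclusion identity
\[
\PP_i(N_{1,i} = n_1, \dots, N_{k,i} = n_k) = \sum_{r_j \geq n_j} (-1)^{\sum_j(r_j - n_j)} \prod_{j=1}^k \binom{r_j}{n_j} \frac{\EE_i\bigl[(N_{1,i})_{r_1} \cdots (N_{k,i})_{r_k}\bigr]}{r_1!\cdots r_k!},
\]
which one obtains by differentiating the factorial-moment expansion of $G_i$ and evaluating at $0$. Passing to the limit term by term (with the same tail control) recovers the Poisson probabilities on the right-hand side of the theorem directly. The tail estimate is the delicate but classical point, and essentially amounts to the moment-determinacy of the Poisson distribution.
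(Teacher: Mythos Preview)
The paper does not prove this theorem: it is quoted as background and attributed to \cite[Theorem~1.23]{bollobas_01}, so there is no ``paper's own proof'' to compare against. Your sketch is the standard argument and is correct in spirit; in particular, your second route via the inclusion--exclusion identity
\[
\mathbb{1}[N=n]=\sum_{r\ge n}(-1)^{r-n}\binom{r}{n}\binom{N}{r},
\]
together with the Bonferroni inequalities (truncations at even and odd levels give upper and lower bounds), is exactly how Bollob\'as proves it, and it generalises cleanly to the multivariate case by taking products.

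One minor caution about your first route: the claim that the truncations of the PGF expansion in powers of $(s_j-1)$ alternately bound $G_i$ from above and below is not obvious, since the remainder $\sum_{r>R}\binom{N}{r}(s-1)^r$ is alternating but with terms that need not be monotone in $r$, so Leibniz does not apply directly. This is why the inclusion--exclusion formulation, where the Bonferroni bounds are a genuine combinatorial fact about indicator functions, is the cleaner and safer path. Since you already flag that route as equivalent, there is no real gap.
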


\section{Translation surfaces and strata} \label{sec:translation_surfaces_strata}

A \emph{translation surface} $(X, \omega)$ is a pair consisting of a compact, connected Riemann surface~$X$ and a holomorphic $1$--form $\omega$ on $X$. By integrating the $1$--form, we obtain a metric on $X$ which is locally isometric to the Euclidean plane except in a neighbourhood of the zeros of $\omega$. The neighbourhood of a zero of order $k$ is isometric to a neighbourhood of the branching point of a $(k+1)$--covering of a Euclidean disk.

Instead of considering the moduli space of all translation surfaces of a given genus~$g$, we consider finer spaces with more information: For a \emph{stratum}, we focus only on those translation surfaces whose area is $1$ and where the zeros are labelled and their orders are fixed. Given a partition of $2g-2 = \sum_{i=1}^\ell m_i$, the \emph{stratum} $\calH_g( m_1, \ldots, m_\ell)$ is the space of all unit-area translation surfaces of genus $g$ with $\ell$ zeros of order $m_1, \ldots, m_\ell$.

A \emph{saddle connection} in $(X,\omega)$ is a geodesic segment that starts and ends in a zero and does not contain a zero in its interior. 
To any given (oriented) saddle connection $\gamma$ on~$(X, \omega)$, we associate the vector~$\int_\gamma \omega$ in $\mathbb{C}$, called the \emph{holonomy vector} of the saddle connection.
On a given translation surface, the number of saddle connections grows quadratically in their length \cite{masur_88, masur_90}.

A saddle connection can also be thought of as an element of the relative homology group of~$(X, \omega)$ relative to the set of zeros. If $\calB$ is a set of saddle connections that form an integral basis for the relative homology, then the holonomy vectors of the elements of $\calB$ determine~$(X, \omega)$. For every $(X,\omega)$ and $\calB$, there is a neighbourhood $U$ of  $(X, \omega)$ in the stratum (here, stratum is meant without the requirement that the area is $1$) such that for every $(X', \omega')$ in~$U$, all elements of~$\calB$ (thought of as elements in the relative homology group) can still be represented in $(X', \omega')$ as saddle connections. Then the set of holonomy vectors of saddle connections in $\calB$ give coordinates for translation surfaces in $U$.
We refer to this set of holonomy vectors as \emph{period coordinates} around $(X,\omega)$.

The period coordinates give an embedding from $U$ to $\mathbb{C}^{2g+\ell-1}$. The associated pullback measure, denoted $\mu$,  of Lebesgue measure on $\mathbb{C}^{2g+\ell-1}$, on the moduli space was studied by Masur \cite{Masur-82} and Veech \cite{Veech-82}. It also defines a measure on the stratum $\calH_g(m_1,\ldots,m_\ell)$ in the following way. For an open set $V \subseteq \calH_g(m_1,\ldots,m_\ell)$, its volume is defined by
\begin{equation}
	\label{eq:def_MV-measure}
	\Vol(V) \coloneqq
	2(2g + \ell -1)
	\cdot \mu(\Cone(V)) 
\end{equation}
where $\Cone(V)$ is the cone of $V$ in the moduli space and $2(2g + \ell -1)$ is the real dimension of the cone. We refer to this measure on $\calH_g(m_1,\ldots,m_\ell)$ when we write $\Vol(\param)$ in the following discussion.

Calculating volumes of strata explicitly is difficult in general. However, using an algorithm of Eskin--Okounkov \cite{eskin_okounkov_01}, Eskin--Zorich predicted in \cite{eskin_zorich_15} and Aggarwal confirmed in \cite[Theorem 1.4]{aggarwal_20} that we have
\begin{equation} \label{eq:volume_stratum}
	\Vol(\calH_g( m_1, \ldots, m_\ell) ) = \frac{4}{\prod_{i=1}^\ell (m_i + 1)} \cdot \left( 1+ O\left(\frac1g \right) \right)
	.
\end{equation}

Recall that strata are not necessarily connected, they can have up to three connected components \cite{kontsevich_zorich_03}.

\section{The method of Siegel--Veech constants} \label{sec:Siegel-Veech}

A useful method to calculate the probability of having a saddle connection of a certain length is by Siegel--Veech constants. To do this, we fix a stratum $\calH$ of unit-area translation surfaces. For $X \in \calH$, $V_{sc}(X)$ denotes the set of (holonomy vectors of) saddle connections on $X$. A~configuration $\calC$ of a saddle connection includes the multiplicity of the saddle connection, the order of the zeros that the saddle connection connects, and whether the zeros are different or not.
Let $V_{\calC}(X) \subseteq V_{sc}(X)$ denote the subset of holonomy vectors whose saddle connections are in the configuration $\calC$. For an integrable function $f:\RR^2 \rightarrow \RR$ with compact support, we define the \textit{Siegel--Veech transform} $\hat f_{\calC}:\calH \rightarrow \RR$ via
     \[
       \hat f_{\calC} (X) = \sum_{v \in V_{\calC}(X)} f(v).
     \]

\begin{theorem}[Siegel--Veech formula {\cite{veech_98}}]
	\label{thm:siegel-veech-formula}
	Let $\calH$ be a connected component of a stratum of unit-area translation surfaces and~$\calC$ a configuration of saddle connections. Then there exists a constant~$c(\calC, \calH)$ such that for every integrable $f:\RR^2 \rightarrow \RR$ with compact~support,
	\begin{equation*}
		\mathbb{E} \left( \hat f_{\calC} \right) =  \frac{\int_{\calH} \hat f_{\calC}}{\Vol(\calH)} = c(\calC, \calH) \cdot \int_{\RR^2}f
		.
	\end{equation*}
 	The integral on the left is with respect to the measure on $\calH$ defined in \protect\eqref{eq:def_MV-measure}, and the integral on the right is with respect to Lebesgue measure on $\RR^2$.
\end{theorem}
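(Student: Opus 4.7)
The plan is to follow Veech's original argument, which converts the Siegel--Veech transform into a Borel measure on $\RR^2$ and uses the uniqueness of $\SL(2,\RR)$--invariant measures on $\RR^2$ to identify that measure as a multiple of Lebesgue measure.

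First, for a Borel set $A \subseteq \RR^2$, define
\[
\nu_\calC(A) \coloneqq \int_\calH \widehat{\One_A}_{\calC}(X) \, d\Vol(X) = \int_\calH \#\bigl( V_\calC(X) \cap A \bigr) \, d\Vol(X).
\]
I would first verify that $\nu_\calC$ is a well-defined, $\sigma$--finite Borel measure on $\RR^2 \setminus \{0\}$. The key input here is the Masur quadratic growth estimate from \cite{masur_88, masur_90}: the number of saddle connections of length at most $R$ on $(X,\omega)$ is bounded by $C(X,\omega)\, R^2$ with $C$ locally integrable on $\calH$. This ensures that $\nu_\calC$ is finite on compact subsets of $\RR^2 \setminus \{0\}$, and countable additivity follows from monotone convergence applied to disjoint unions.

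Next, I would establish that $\nu_\calC$ is $\SL(2,\RR)$--invariant. The $\SL(2,\RR)$ action on $\calH$ preserves the measure $\Vol(\param)$ (this is the classical Masur--Veech invariance) and it transforms holonomy vectors of saddle connections equivariantly: for $g \in \SL(2,\RR)$ one has $V_\calC(g \cdot X) = g \cdot V_\calC(X)$, since the $\SL(2,\RR)$ action does not change the combinatorial data of the configuration $\calC$ (order of zeros, multiplicities, whether endpoints coincide). Substituting $X \mapsto g^{-1} X$ in the integral then gives $\nu_\calC(g A) = \nu_\calC(A)$.

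Now I would invoke the uniqueness of $\SL(2,\RR)$--invariant Radon measures on $\RR^2 \setminus \{0\}$. Since $\SL(2,\RR)$ acts transitively on $\RR^2 \setminus \{0\}$ with unimodular stabilizer, any such measure is a scalar multiple of Lebesgue measure; call the scalar $c(\calC, \calH)$. Together with the observation that $0 \notin V_\calC(X)$ for any $X$, this yields
\[
\int_\calH \widehat{\One_A}_{\calC} \, d\Vol = c(\calC, \calH) \cdot \Vol(\calH) \cdot \int_{\RR^2} \One_A,
\]
which is the claimed identity for indicator functions. Extending to non-negative simple functions by linearity, then to non-negative integrable functions by monotone convergence, and then to general integrable $f$ by splitting into positive and negative parts, gives the formula for all compactly supported integrable $f$. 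Dividing by $\Vol(\calH)$ produces the stated expectation formula.

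The main technical obstacle is the integrability step underlying the definition of $\nu_\calC$: making rigorous that $\widehat{\One_A}_{\calC}$ is measurable and that its integral is finite for $A$ bounded away from $0$. This rests on Masur's quadratic growth theorem together with a careful check that the constant $C(X,\omega)$ in that estimate, restricted to the configuration $\calC$, is integrable against $\Vol(\param)$ — equivalently, that the measure $\nu_\calC$ assigns finite mass to an annulus. Once this finiteness and the $\SL(2,\RR)$--equivariance of configurations are in hand, the uniqueness step is essentially formal.
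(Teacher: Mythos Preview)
The paper does not prove this theorem at all: it is quoted as a known result from \cite{veech_98} and used as a black box, so there is no ``paper's own proof'' to compare against. Your outline is, however, a faithful sketch of Veech's original argument --- define a Borel measure on $\RR^2$ via the transform, show it is $\SL(2,\RR)$--invariant using invariance of the Masur--Veech measure and equivariance of holonomy vectors, and invoke the uniqueness of $\SL(2,\RR)$--invariant Radon measures on $\RR^2\setminus\{0\}$.

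One point worth sharpening: your justification of local finiteness of $\nu_\calC$ leans on the \emph{pointwise} quadratic growth bound $\#(V_\calC(X)\cap B(0,R)) \le C(X,\omega)\,R^2$, but the constant $C(X,\omega)$ degenerates as $(X,\omega)$ approaches the boundary of $\calH$, so ``$C$ locally integrable'' is not quite the right statement --- you need global integrability over all of $\calH$. What Veech actually uses (and what Eskin--Masur later refined) is a volume estimate on the thin part of the stratum: the set of $(X,\omega)$ carrying a saddle connection shorter than $\ep$ has measure $O(\ep^2)$, and more generally one controls the $\Vol$--integral of the counting function on a bounded $A\subset\RR^2$ directly via such estimates rather than via a pointwise quadratic bound with an integrable coefficient. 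Once that finiteness is in hand, the rest of your argument goes through as written.
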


The constant $c(\calC, \calH)$ is called the \emph{Siegel--Veech constant} of the configuration and the stratum. It does not only appear when studying the average number of saddle connections for translation surfaces in a given stratum but also when counting saddle connections on a fixed translation surface of this stratum.

\begin{theorem}[Quadratic growth of number of saddle connections \cite{eskin_masur_01}]
	Let $\calH$, $\calC$, and~$c(\calC, \calH)$ as in \cref{thm:siegel-veech-formula}. Then for almost every $(X,\omega) \in \calH$, we have
	\begin{equation*}
		\lim_{T \to \infty} \frac{ \left| V_{\calC}(X) \cap B(0,T) \right| }{\pi T^2}
		= c(\calC, \calH)
		.
	\end{equation*}
\end{theorem}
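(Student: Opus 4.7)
The plan is to deduce this pointwise a.e.\ counting statement from the Siegel--Veech formula (\cref{thm:siegel-veech-formula}) by exploiting the $\SL(2,\RR)$ action on the stratum $\calH$. The action preserves $\Vol$ and acts on holonomy vectors in the obvious way, i.e.\ $V_\calC(hX) = h \cdot V_\calC(X)$ for $h \in \SL(2,\RR)$, and the Teichm\"uller geodesic flow $g_t = \mathrm{diag}(e^t, e^{-t})$ is ergodic on each connected component of $\calH$ by the theorems of Masur and Veech. This dynamical input is what converts a fixed-surface count into a space average that the Siegel--Veech formula can evaluate.

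First I would fix a nonnegative, smooth, compactly supported $f \colon \RR^2 \to \RR$ approximating the indicator $\mathbf{1}_{B(0,1)}$ and study the circle-average of the Siegel--Veech transform along the flow orbit,
\[
I_T(X) = \frac{1}{2\pi} \int_0^{2\pi} \hat f_\calC(g_T r_\theta X)\, d\theta
= \frac{1}{2\pi} \sum_{v \in V_\calC(X)} \int_0^{2\pi} f(g_T r_\theta v)\, d\theta .
\]
The inner integral depends only on $|v|$ and $T$, and a short geometric computation shows that further averaging over $T \in [0, R]$ converts this expression, up to lower-order boundary terms (annuli of width one), into the count of $v \in V_\calC(X)$ with $|v| \le e^R$. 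Hence it suffices to evaluate $\lim_{R \to \infty} \frac{1}{R}\int_0^R I_T(X)\, dT$. Next, by Fubini,
\[
\frac{1}{R}\int_0^R I_T(X)\, dT
= \frac{1}{2\pi}\int_0^{2\pi}\Bigl( \frac{1}{R}\int_0^R \hat f_\calC(g_T r_\theta X)\, dT \Bigr) d\theta ,
\]
and Birkhoff's ergodic theorem applied to $g_t$ on $\calH$, together with $\SO(2)$-invariance of $\Vol$, shows the inner time average converges, for a.e.\ $X$ and a.e.\ $\theta$, to $\frac{1}{\Vol(\calH)}\int_\calH \hat f_\calC\, d\Vol$. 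By \cref{thm:siegel-veech-formula} this equals $c(\calC, \calH) \cdot \int_{\RR^2} f$. Approximating $\mathbf{1}_{B(0,1)}$ from above and below by such $f$, and noting $\int_{\RR^2}\mathbf{1}_{B(0,1)} = \pi$, yields the claimed quadratic asymptotic with constant $c(\calC, \calH)$.

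The main obstacle is the unboundedness of $\hat f_\calC$, which blows up on the thin part of $\calH$ where the surface carries a short saddle connection. The Siegel--Veech formula places $\hat f_\calC$ in $L^1$, but Birkhoff alone is not enough to justify pulling the $\theta$-integral inside the limit or to pass cleanly from smooth $f$ to the indicator. The technical heart of the argument, as in the original Eskin--Masur paper, is therefore to establish an $L^p$-bound ($p > 1$) for $\hat f_\calC$ together with a maximal inequality for the $\SL(2,\RR)$-averages, via a direct analysis of how many short saddle connections a translation surface can simultaneously carry. Once these analytic bounds are in place, the outline above upgrades the $L^1$ ergodic theorem to the desired a.e.\ pointwise statement and delivers the quadratic growth with the Siegel--Veech constant as the leading coefficient.
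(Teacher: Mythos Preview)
The paper does not prove this theorem. It is quoted from \cite{eskin_masur_01} as a known result, included only to situate Siegel--Veech constants in context, and is not used anywhere in the arguments that follow. There is therefore no ``paper's own proof'' to compare against.

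That said, your outline is a fair summary of the actual Eskin--Masur strategy: relate the radial count to circle-averages of the Siegel--Veech transform along Teichm\"uller orbits, invoke an ergodic theorem for the $\SL(2,\RR)$ action, and identify the limiting space average via \cref{thm:siegel-veech-formula}. You are also right that the serious work lies in the integrability of $\hat f_\calC$; Eskin--Masur obtain this from their pointwise bound on the number of $\ep$-short saddle connections (their ``no small triangles'' lemma), which yields $\hat f_\calC \in L^{1+\delta}$ and feeds into a Nevo-type maximal ergodic theorem. One caution about your sketch: the passage from $\frac{1}{R}\int_0^R I_T\,dT$ to the count of vectors of norm at most $e^R$ is not just ``lower-order boundary terms''---it requires a careful comparison lemma (what Eskin--Masur call the Main Lemma) relating the circle average of $\hat f_\calC$ to the counting function, and the error control there already uses the same integrability input. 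So the two technical ingredients you separate are in fact intertwined.
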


\Cref{thm:siegel-veech-formula} allows us to estimate expected values of various quantities on a stratum in terms of Siegel--Veech constants.  Eskin--Masur--Zorich developed in \cite{eskin_masur_zorich_03} a method for computing Siegel--Veech constants using combinatorial constructions and surgeries as well as the Siegel--Veech formula above.
Based on their recursive formulas, asymptotics of Siegel--Veech constants for large genus have been more recently calculated by many different authors, e.g.\  \cite{chen_moeller_zagier_18, sauvaget_18, aggarwal_20, aggarwal_19, chen_moeller_sauvaget_zagier_20}. We recall now the Siegel--Veech constants for the configurations that are relevant for our proofs.

Let $\calC_{\! \monetomtwo}$ be the configuration of a single saddle connection connecting a fixed zero of order $m_1$ to a fixed, different zero of order $m_2$. Then by \cite[Corollary 1]{aggarwal_20} and \cite[Theorem 1.2]{aggarwal_19} for connected strata and \cite[Equation (14)]{randecker_25} for non-connected strata, the corresponding Siegel--Veech constant is
\begin{equation} \label{Eq:sc}
	c_{\! \monetomtwo} = (m_1+1)(m_2+1) \cdot \left(1 + O \left( \frac{1}{g} \right) \right).
\end{equation}

Let $\calC_{\! \homsconeone}$ be the configuration of two homologous saddle connections connecting a fixed zero of order $1$ to a fixed, different zero of order $1$. Then by \cite[Proposition 3.4]{aggarwal_19} and \cite[Theorem 10]{randecker_25}, the corresponding Siegel--Veech constant is
\begin{equation} \label{Eq:hom}
	c_{\! \homsconeone} = O \left(\frac{1}{g^2} \right).
\end{equation}

Let $\calC_\mloop$ be the configuration of a saddle connection connecting a fixed zero of order~$m$ to itself. Then by \cite[Appendix 1]{vallejos_24} and \cite[Corollary 7.4]{randecker_25}, the corresponding Siegel--Veech constant is 
\begin{equation} \label{eq:loop}
	c_{\mloop} = (m+1)^2 \cdot \left(1 + O \left( \frac{1}{g} \right) \right)
	.
\end{equation}

 \section{Collapsing and opening up zeros} \label{sec:collapsing}
Our calculations depend heavily on the estimation of volumes of subsets of strata which contain only translation surfaces with short saddle connections.
For this, we vary a construction from \cite[Sections 8.1 and 8.2]{eskin_masur_zorich_03} that collapses a saddle connection between two different zeros of order $1$. We describe here the construction and its inverse.

Let $(X,\omega)$ be a translation surface with two zeros $v_1$ and $v_2$ of order $1$ that are connected by a saddle connection $\alpha$ of length $\delta$. We assume for the description of the construction that~$\alpha$ is horizontal.
There are four horizontal separatrices starting in $v_2$, one of them being~$\alpha$.
Let $\gamma$ be a geodesic segment of length $\delta$ on the horizontal separatrix starting in~$v_2$ which forms an angle of $2\pi$ with $\alpha$. We call $\gamma$ the \emph{ghost double} of $\alpha$.
Note that $\gamma$ might not always exist and we will discuss in detail later what to do when it does not exist.

We now cut open along $\alpha$ and $\gamma$, obtaining two copies of $\alpha$ and $\gamma$ each in the metric completion of $X \setminus (\alpha \cup \gamma)$. The upper copy of $\alpha$ we call $\alpha_1$ and the lower copy~$\alpha_2$, correspondingly for $\gamma$. Then we reglue $\alpha_1$ with~$\gamma_1$ and reglue $\alpha_2$ with $\gamma_2$ as in \cref{fig:collapsing_one}.

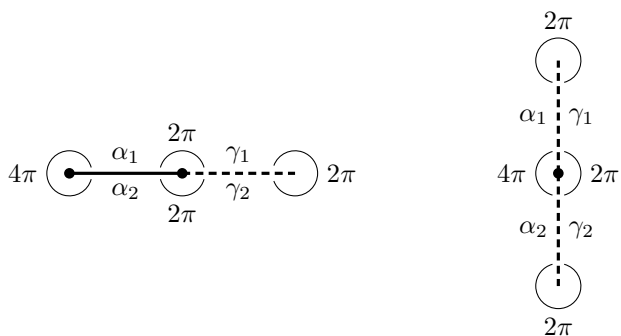
\begin{figure}[h]
	\centering
	\begin{tikzpicture}[scale=1]
		\draw[very thick] (-1.5,0) -- node[above]{$\alpha_1$} node[below]{$\alpha_2$} (0,0) ;
		\draw[very thick, densely dashed] (0,0) -- node[above]{$\gamma_1$} node[below]{$\gamma_2$} (1.5,0);
		
		\fill (-1.5,0) circle (2pt);
		\draw (-1.5,0) + (-15:0.3) arc (-15:-180:0.3) node[left]{$4\pi$} arc (180:15:0.3);
		
		\fill (0,0) circle (2pt);
		\draw (165:0.3) arc (165:90:0.3) node[above] {$2\pi$}  arc (90:15:0.3);
		\draw (-165:0.3) arc (-165:-90:0.3) node[below] {$2\pi$}  arc (-90:-15:0.3);
		
		\draw (1.5,0) + (160:0.3) arc (160:0:0.3) node[right]{$2\pi$} arc (0:-160:0.3);
		
		\begin{scope}[xshift=5cm]
			\draw[very thick, densely dashed] (0,0) -- node[left]{$\alpha_1$} node[right]{$\gamma_1$} (0,1.5) ;
			\draw[very thick, densely dashed] (0,0) -- node[left]{$\alpha_2$} node[right]{$\gamma_2$} (0,-1.5);
			
			\fill (0,0) circle (2pt);
			\draw (105:0.3) arc (105:180:0.3) node[left]{$4\pi$} arc (-180:-105:0.3);
			\draw (-75:0.3) arc (-75:0:0.3) node[right]{$2\pi$} arc (0:75:0.3);
			
			\draw (0,1.5) + (-75:0.3) arc (-75:90:0.3) node[above]{$2\pi$} arc (90:255:0.3);
			
			\draw (0,-1.5) + (75:0.3) arc (75:-90:0.3) node[below]{$2\pi$} arc (-90:-255:0.3);
		\end{scope}	
	\end{tikzpicture}
	\caption{Collapsing a zero into another zero.}
	\label{fig:collapsing_one}
\end{figure}

Note that the two points corresponding to $v_2$ under this cutting-and-gluing are regular points and that~$v_1$ turns into a new zero of order $2$. Therefore we call this construction \emph{collapsing the zero~$v_2$ into the zero $v_1$ (along $\alpha$)} and the inverse of it \emph{opening up a zero of order $2$}. The data for opening up a zero of order $2$ is the holonomy vector of the saddle connection $\alpha$ and a combinatorial information arising from choosing two out of the three geodesic segments with the given holonomy vector starting in the given zero of order $2$.

We consider now $\calH_g=\calH_g(1,1, \dots, 1)$, the principal stratum of unit-area translation surfaces of genus $g$ with~$2g-2$ zeros of order~$1$.
The process of collapsing a zero into another zero defines a map
\begin{equation} \label{eq:measure-preserving-map}
	\calH^\epsilon \times \{\pm 1\} \to \calH_g(2,1, \ldots, 1) \times M \times \mathbb{D}_{\epsilon}
\end{equation}
where $\calH^\epsilon$ is the subset of $\calH_g$ of translation surfaces with exactly one saddle connection of length at most~$\epsilon$ and the choice of $1$ or $-1$ indicates the orientation of this saddle connection.
Furthermore, $M$ is the set of possible combinatorial data of size $(2g-2)(2g-3)$ which records the names of the zeros that were connected by the saddle connection, and~$\mathbb{D}_{\epsilon}$ is the disk of radius~$\epsilon$ in $\mathbb{R}^2$.
This map is well-defined and its inverse, defined by opening up the unique zero of order $2$, is also well-defined up to the three possible choices of two geodesic segments. Hence, the map is $3$--to--$1$.
Furthermore, the map is locally measure-preserving: Let $(X,\omega) \in \calH^\epsilon$ and $\calB$ be an integral basis of relative homology for $(X, \omega)$ which contains the unique shortest saddle connection~$\alpha$. Then $\calB$ defines period coordinates around $(X,\omega)$ and the image of $\calB \setminus \alpha$ defines period coordinates around the translation surface in the image of~$(X,\omega)$ whereas the holonomy vector of $\alpha$ defines coordinates for $\mathbb{D}_{\epsilon}$. As the real dimensions of domain and image are both $2(4g - 3)$ and the lengths of the vectors in a basis of domain and image coincide, also the measures of a neighbourhood of $(X,\omega)$ and its image coincide; see \cite[Section~8.2]{eskin_masur_zorich_03} for more details.

If we consider several saddle connections which do not intersect (including not sharing a zero as end point) and whose ghost doubles do not intersect each other nor the saddle connections, we can collapse all pairs of zeros simultaneously, see \cref{sec:collapsing_simultaneously}.
For intersecting saddle connections, this is in general not true. However, for a specific configuration, we can use the following simultaneous collapsing~procedure.

Let $(X,\omega)$ be a translation surface with~$v_1$, $v_2$, and $v_3$ three zeros of order $1$, $\alpha$ a saddle connection between $v_1$ and~$v_2$ of length~$\delta_1$, and $\beta$ a saddle connection between $v_2$ and $v_3$ of length $\delta_2$.
To collapse $v_1$ and $v_3$ into $v_2$, we consider a ghost double of~$\alpha$, starting in~$v_1$ and forming angle $2\pi$ with $\alpha$, and a ghost double of~$\beta$, starting in $v_3$ and forming angle~$2\pi$ with~$\beta$. Note that we have to make sure that neither of the ghost doubles and saddle connections intersect. The cases, in which they do intersect, we again exclude later by other volume~estimations.

We cut open along $\alpha$, $\beta$, and their ghost doubles and reglue as indicated in \cref{fig:collapsing_two}. Then~$v_1$ and $v_3$ turn into regular points and $v_2$ turns into a zero of order $3$.

\begin{figure}[h]
	\centering
	\begin{tikzpicture}[scale=1]
		\draw[very thick] (0,0) -- node[above]{$\alpha_1$} node[below]{$\alpha_2$} (1.7,0) ;
		\draw[very thick, densely dashed] (1.7,0) -- node[above]{$\gamma_1$} node[below]{$\gamma_2$} (3.4,0);
		
		\draw[very thick] (0,0) -- node[left]{$\beta_1$} node[right]{$\beta_2$} (120:2.3) ;
		\draw[very thick, densely dashed] (120:2.3) -- node[left]{$\gamma_3$} node[right]{$\gamma_4$} +(120:2.3);
		
		\fill (0,0) circle (2pt);
		\draw (-15:0.3) arc (-15:-120:0.3) node[left]{$4\pi-\theta$} arc (-120:-225:0.3);
		\draw (15:0.3) arc (15:60:0.3) node[above right]{$\theta$} arc (60:105:0.3);
		
		\fill (1.7,0) circle (2pt);
		\draw (1.7,0) + (165:0.3) arc (165:90:0.3) node[above] {$2\pi$}  arc (90:15:0.3);
		\draw (1.7,0) + (-165:0.3) arc (-165:-90:0.3) node[below] {$2\pi$}  arc (-90:-15:0.3);
		
		\draw (3.4,0) + (160:0.3) arc (160:0:0.3) node[right]{$2\pi$} arc (0:-160:0.3);
		
		\fill (120:2.3) circle (2pt);
		\draw (120:2.3) + (-45:0.3) arc (-45:30:0.3) node[right]{$2\pi$} arc (30:105:0.3);
		\draw (120:2.3) + (-75:0.3) arc (-75:-150:0.3) node[left]{$2\pi$} arc (-150:-225:0.3);
		
		\draw (120:4.6) + (-45:0.3) arc (-45:120:0.3) node[above]{$2\pi$} arc (-240:-75:0.3);
		
		\begin{scope}[xshift=7cm, yshift=2cm]
			\draw[very thick, densely dashed] (0,0) -- node[pos=0.6, above]{$\alpha_1$} node[pos=0.6, below]{$\gamma_1$} +(0:1.7) ;
			\draw[very thick, densely dashed] (0,0) -- node[left]{$\alpha_2$} node[right]{$\gamma_2$} (0,-1.7);
			
			\draw[very thick, densely dashed] (0,0) -- node[above]{$\gamma_3$} node[below]{$\beta_1$} +(150:2.3) ;
			\draw[very thick, densely dashed] (0,0) -- node[left]{$\gamma_4$} node[right]{$\beta_2$} +(60:2.3);
			
			\fill (0,0) circle (2pt);
			\draw (-195:0.3) arc (-195:-150:0.3) node[left]{$4\pi-\theta$} arc (-150:-105:0.3);
			\draw (-75:0.3) node[right]{$2\pi$} arc (-75:-45:0.3) arc (-45:-15:0.3);
			\draw (135:0.3) arc (135:105:0.3) node[above]{$2\pi$} arc (105:75:0.3);
			\draw (45:0.3) node[right]{$\theta$} arc (45:30:0.3) arc (30:15:0.3);
			
			\draw (0:1.7) + (-165:0.3) arc (-165:0:0.3) node[right]{$2\pi$} arc (0:165:0.3);
			
			\draw (0,-1.7) + (75:0.3) arc (75:-90:0.3) node[below]{$2\pi$} arc (-90:-255:0.3);
			
			\draw (150:2.3) + (-15:0.3) arc (-15:150:0.3) node[left] {$2\pi$} arc (150:315:0.3);
			
			\draw (60:2.3) + (-105:0.3) arc (-105:60:0.3) node[above] {$2\pi$} arc (60:235:0.3);
		\end{scope}	
	\end{tikzpicture}
	\caption{Collapsing two zeros into a third one, along two saddle connections.}
	\label{fig:collapsing_two}
\end{figure}
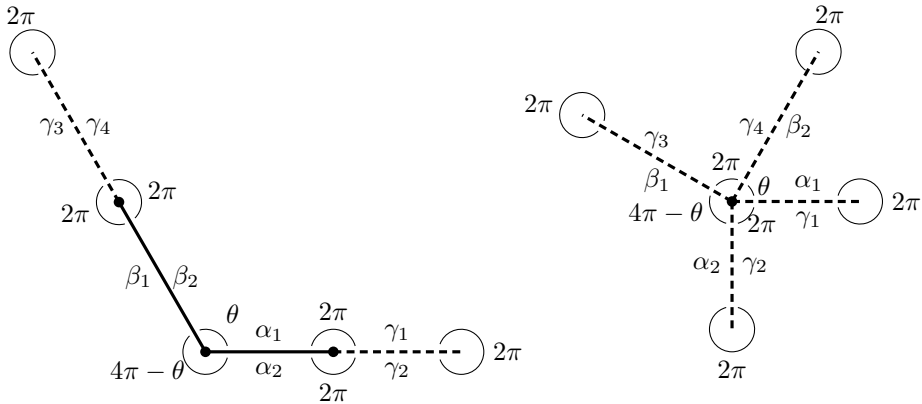

\section{The generic and the non-generic subsets of \texorpdfstring{$\calH_g$}{the stratum}} \label{sec:generic}

Our strategy to prove \cref{Thm:Main} is to show the statement first for a generic subset $\Hgen \subseteq \calH_g$. Then we show that the proportion of the measure of the non-generic subset~$\Hng$ in $\calH_g$ tends to zero as~$g \to \infty$. 

\begin{definition} \label{Def:generic}
Fix $B \in \mathbb{R}_+$.
We define the following subsets of $\calH_g$
\begin{align*} 
	\calH_\anyloop & \coloneqq
	\left\{
		(X,\omega) \in \calH_g : \,
		 X  \text{ contains a loop of length at most  $\sfrac{6B}{g}$}
	\right\} \\
	\calH_\cherry &\coloneqq 
	\left\{ \begin{aligned}
		(X,\omega) \in \calH_g : \, & X \text{ contains a saddle connection of length at most $\sfrac{B}{g}$ and} \\
		& \qquad \text{a saddle connection of length at most $\sfrac{2B}{g}$ that share a zero}
	\end{aligned}
	\right\}
\end{align*}
where a \emph{loop} is a saddle connection that connects a zero to itself.

Furthermore, we define the \emph{non-generic subset} of $\calH_g$ to be $\Hng = \calH_{\anyloop} \cup \calH_{\cherry}$ and the 
\emph{generic subset} to be the complement of the non-generic subset: $\Hgen = \calH_g \setminus \Hng$. 
\end{definition}

The next proposition justifies calling the subsets generic and non-generic.

\begin{proposition} \label{Prop:generic-is-generic}
Let  $\Hgen$ as in \cref{Def:generic}. Then
\begin{equation*}
	\frac{\Vol(\Hgen)}{\Vol(\calH_g)} \to 1 \qquad\text{as}\qquad g \to \infty. 
\end{equation*}
More specifically, 
\[
 \frac{\Vol(\calH_{\rm NG})}{\Vol(\calH_g)} = O\left( \frac1g \right)
\]
where the implied constant does not depend on $g$.
\end{proposition}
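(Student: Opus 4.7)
The plan is to bound $\Vol(\calH_\anyloop)$ and $\Vol(\calH_\cherry)$ separately via the Siegel--Veech formula (\cref{thm:siegel-veech-formula}) combined with the collapsing constructions of \cref{sec:collapsing}, and then conclude by the union bound.

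For $\calH_\anyloop$, I would apply Siegel--Veech to the loop configuration at a fixed zero of order $1$, taking $f$ to be the indicator of the disk of radius $\sfrac{6B}{g}$. Using $c_\oneloop = 4 \cdot \bigl(1+O(\sfrac{1}{g})\bigr)$ from \eqref{eq:loop} and summing over all $2g-2$ zeros gives
\[
  \EE_g\bigl(\#\{\text{loops of length} \leq \sfrac{6B}{g}\}\bigr) = (2g-2) \cdot 4\pi\bigl(\sfrac{6B}{g}\bigr)^2 \cdot \bigl(1+O(\sfrac{1}{g})\bigr) = O(\sfrac{1}{g}),
\]
so Markov's inequality yields $\Vol(\calH_\anyloop)/\Vol(\calH_g) = O(\sfrac{1}{g})$.

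For $\calH_\cherry$, after subtracting $\calH_\anyloop$ I may assume that neither of the two cherry saddle connections $\alpha, \beta$ (with $|\alpha| \leq \sfrac{B}{g}$, $|\beta| \leq \sfrac{2B}{g}$, sharing a zero $v_2$) is a loop. The principal case is that the three endpoints $v_1, v_2, v_3$ are all distinct; here I would apply the simultaneous two-saddle-connection collapsing construction from \cref{sec:collapsing} (collapsing $v_1$ and $v_3$ into $v_2$ to produce an order-$3$ zero) to obtain a locally measure-preserving map into $\calH_g(3, 1, \ldots, 1) \times M \times \mathbb{D}_{B/g} \times \mathbb{D}_{2B/g}$, where $|M| = O(g^3)$ records the choice of central label $v_2$ and the ordered pair $(v_1, v_3)$ of absorbed labels, together with a bounded opening-up multiplicity. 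Combined with the volume ratio $\Vol(\calH_g(3, 1, \ldots, 1))/\Vol(\calH_g) = 2 \cdot \bigl(1+O(\sfrac{1}{g})\bigr)$ derived from \eqref{eq:volume_stratum}, this part of $\calH_\cherry$ contributes $O(g^3) \cdot \pi(\sfrac{B}{g})^2 \pi(\sfrac{2B}{g})^2 = O(\sfrac{1}{g})$ to $\Vol(\calH_\cherry)/\Vol(\calH_g)$.

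The residual case $v_1 = v_3 \neq v_2$ (the two cherry saddle connections share both endpoints) is handled similarly. When $\alpha, \beta$ are homologous, \eqref{Eq:hom} gives $c_\homsconeone = O(\sfrac{1}{g^2})$; summation over the $O(g^2)$ pairs of zeros and integration over the common holonomy disk of radius $\sfrac{2B}{g}$ contributes $O(\sfrac{1}{g^2})$. When $\alpha, \beta$ are non-homologous, a further collapsing argument (collapse $\alpha$ first into $\calH_g(2, 1, \ldots, 1)$ and then analyze $\beta$ by Siegel--Veech in the smaller stratum using \eqref{eq:loop} with $m=2$) yields a contribution of the same order. Summing all cases gives $\Vol(\Hng)/\Vol(\calH_g) = O(\sfrac{1}{g})$. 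The main technical obstacle is verifying that each collapsing map is genuinely measure-preserving on its domain: one must excise the subsets where the short saddle connections or their ghost doubles intersect, and show that these exceptional loci, each of which forces additional short saddle connections to be present, can themselves be bounded by iterating the same Siegel--Veech and collapsing template.
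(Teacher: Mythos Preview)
Your proposal is correct and follows essentially the same template as the paper: Siegel--Veech for $\calH_\anyloop$, the two-saddle-connection collapse into $\calH_g(3,1,\ldots,1)$ for the bulk of $\calH_\cherry$, and a one-collapse-plus-Siegel--Veech argument for the residue. The one place where the paper is more explicit is precisely the obstacle you flag at the end. Your case split by ``$v_1=v_3$ versus distinct'' does not cleanly isolate the failure locus of the two-collapse: even with three distinct endpoints the ghost doubles can intersect each other or the original saddle connections. Rather than iterate, the paper introduces the auxiliary set $\calH_\closedgeod$ of surfaces carrying a closed geodesic of length at most $\sfrac{6B}{g}$ and observes that \emph{any} obstruction to the two-collapse forces the surface into $\calH_\anyloop\cup\calH_\closedgeod$. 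The set $\calH_\closedgeod\setminus(\calH_\anyloop\cup\calH_\homsc)$ is then handled in a single stroke by collapsing the shortest saddle connection on the shortest closed geodesic and applying Siegel--Veech in $\calH_g(2,1,\ldots,1)$ (landing in $\calH_\twoloop\cup\calH_\twotoone$, not only $\calH_\twoloop$); this simultaneously absorbs your non-homologous $v_1=v_3$ case and all ghost-double intersections, with no iteration needed.
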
 

\begin{proof}
	We first use the method of Siegel--Veech constants to obtain an estimate on the volume of $\calH_{\anyloop}$. For this, let $f \colon \RR^2 \to \RR$ be the indicator function of the ball of radius $\sfrac{6B}{g}$ centred at~$0$. Choose~$\calC_\oneloop$ to be the configuration of a saddle connections which forms a loop, starting at a zero of order $1$.
	
	Applying \cref{thm:siegel-veech-formula} and the Siegel--Veech constant from Equation~\eqref{eq:loop}, we obtain
	\begin{equation}
		\Vol (\calH_\anyloop) 
		= \int_{\calH_\anyloop } \One
		\leq \int_{\calH_g} \hat{f}_{\calC_\oneloop}
		= (2g-2) \cdot
		c_\oneloop \cdot \int_{\RR^2} f \cdot \Vol(\calH_g)
		= O\left( \frac{B^2}{g} \right)  \cdot \Vol(\calH_g)
		, \label{eq:volume_loop}
	\end{equation}
	where the factor $2g-2$ corresponds to the number of possible choices for the zero at which the loop starts and ends.
	
	We will later in the proof need similar estimates as \eqref{eq:volume_loop} for the following subset of $\calH_g$
	\begin{align*} 
		\calH_\homsc &\coloneqq
		\left\{ \begin{aligned}
			(X,\omega) \in \calH_g : \, &X \text{ contains a pair of homologous saddle connections }\\
			& \qquad \text{of length at most $\sfrac{B}{g}$ between different zeros}
		\end{aligned}\right\}
	\end{align*}
	and the following two subsets of $\calH_g(2,1,\ldots,1)$
	\begin{align*} 
		\calH_{\twoloop} & \coloneqq
		\left\{ \begin{aligned}
			(X,\omega) \in \calH_g(2,1,\ldots,1) : \, & X  \text{ contains a loop of length at most  $\sfrac{6B}{g}$} \\
			& \qquad \text{starting at the zero of order } 2
		\end{aligned} \right\} \\
		\calH_\twotoone &\coloneqq 
		\left\{ \begin{aligned}
			(X,\omega) \in \calH_g(2,1,\ldots,1) : \, & X  \text{ contains a saddle connection of length at most  $\sfrac{3B}{g}$} \\
			& \qquad \text{from the zero of order } 2 \text{ to a zero of order } 1
		\end{aligned} \right\}
		.
	\end{align*}
	
	Analogously to \eqref{eq:volume_loop}, we obtain with the Siegel--Veech constant from ~\eqref{Eq:hom}
	\begin{align}
		\Vol (\calH_\homsc) & = O\left( \frac{B^2}{g^2} \right)  \cdot \Vol(\calH_g)
		, \label{eq:volume_hom} \\
		\intertext{	with the Siegel--Veech constant from~\eqref{eq:loop} for $m=2$}
		\Vol ( \calH_{\twoloop} ) & = O \left( \frac{B^2}{g^2} \right)  \cdot \Vol(\calH_g(2,1,\ldots,1)) , \label{eq:volume_loops_order-2} \\
		\intertext{and with the Siegel--Veech constant from~\eqref{Eq:sc} for $m_1=2$ and $m_2=1$}
		\Vol (\calH_\twotoone ) &=  O \left( \frac{B^2}{g} \right)  \cdot \Vol(\calH_g(2,1,\ldots,1)) . \label{eq:volume_order-2}
	\end{align}

	To estimate the volume of $\calH_\cherry$, the idea is to simultaneously collapse the two saddle connections that share a zero. We introduce another subset of $\calH_g$  which contains some of the cases for which the collapsing is not possible. Let
	\begin{equation*}
		\calH_{\closedgeod} \coloneqq
		\left\{
			(X,\omega) \in \calH_g : 
			X \text{ contains a closed geodesic of length at most } \sfrac{6B}{g}
		\right\}
		.
	\end{equation*}
	
	Note that a closed geodesic on a translation surface in $\calH_\closedgeod$ is either a chain of saddle connections with angles at least $\pi$ between them or is contained in a cylinder. In the latter case, we can represent the closed geodesic as a boundary curve of the cylinder and hence we have a chain of saddle connections again.

	We already have an upper bound for the volume of $\calH_\anyloop$. To determine an upper bound for the volume of $\Hng = \calH_\anyloop \cup \calH_{\cherry}$, we consider now $\calH_{\cherry} \setminus (\calH_\anyloop \cup \calH_\closedgeod)$, $\calH_{\closedgeod} \setminus (\calH_\anyloop \cup \calH_\homsc)$ and~$\calH_\homsc$ separately.
	
	First, for a translation surface in $\calH_\cherry \setminus (\calH_\anyloop \cup \calH_\closedgeod)$, let $\alpha$ be the shortest saddle connection that shares a zero with a saddle connection of length at most $\sfrac{2B}{g}$. Furthermore, let $\beta$ be the shortest saddle connection that $\alpha$ shares a zero with.  Then we can collapse two zeros along $\alpha$ and $\beta$ into the shared zero. 
	As the length of $\alpha$ is at most~$\sfrac{B}{g}$ and the length of $\beta$ is at most $\sfrac{2B}{g}$,
	this gives us the following locally measure-preserving~map:
	\begin{equation*}
		\calH_{\cherry} \setminus (\calH_\anyloop \cup \calH_\closedgeod) \to M_{\cherry} \times \calH_g(3,1,\ldots, 1) \times \mathbb{D}_{\sfrac{B}{g}} \times \mathbb{D}_{\sfrac{2B}{g}} 
	\end{equation*}
	Note that this map is well-defined on a subset of full measure:
	The saddle connections $\alpha$ and~$\beta$ are uniquely determined except for a subset of measure $0$.
	And if $\alpha$ and $\beta$	or their ghost doubles would intersect, $\alpha$ would be a loop or we would have a closed geodesic of length at most~$\sfrac{6B}{g}$ that shares a zero with $\alpha$, so the translation surface would be in $\calH_\anyloop \cup \calH_\closedgeod$.
	The set~$M_{\cherry}$ here expresses the possible choices of the three zeros involved and hence is of order $g^3$.
	We can define an inverse of the map up to a finite choice of the geodesic segments which are to be cut open. Hence, the map is finite--to--$1$ and we obtain with Equation~\eqref{eq:volume_stratum}
	\begin{align*}
		\frac{ \Vol (\calH_{\cherry} \setminus  (\calH_\anyloop \cup \calH_\closedgeod) )}{ \Vol (\calH_g) }
		& = O(1) \cdot |M_{\cherry}|
		\cdot \frac{ \Vol (\calH_g(3,1,\ldots, 1))}{ \Vol (\calH_g) }
		\cdot \Vol \left( \mathbb{D}_{\sfrac{B}{g}} \times \mathbb{D}_{\sfrac{2B}{g}} \right) \\
		& = O \left( g^3 \cdot \frac{4}{2^{2g-5} \cdot 4} \cdot \frac{2^{2g-2}}{4} \cdot \frac{B^4}{g^4} \right)
		= B^4 \cdot O\left( \frac{1}{g} \right)
		.
	\end{align*}
	
	For the set $\calH_\closedgeod \setminus (\calH_\anyloop \cup \calH_\homsc)$, we approach the volume estimate in two steps: For a translation surface in $\calH_\closedgeod \setminus (\calH_\anyloop \cup \calH_\homsc)$, choose the shortest closed geodesic.
	Let $\alpha$ be the shortest saddle connection on this closed geodesic.
	Then $\alpha$ is not a loop and we can collapse a zero along $\alpha$ into another zero such that we obtain a closed geodesic of length at most~$\sfrac{6B}{g}$. This new closed geodesic is either a loop, starting at a zero of order $2$, or it contains a saddle connection of length at most $\sfrac{3B}{g}$ from a zero of order $2$ to a zero of order $1$. Hence we obtain
	the following finite--to--$1$, locally measure-preserving map:
	\begin{equation*}
		\calH_{\closedgeod}\setminus (\calH_\anyloop \cup \calH_\homsc) \to M_{\closedgeod} \times \left( \calH_{\twoloop} \cup \calH_\twotoone \right) \times \mathbb{D}_{\sfrac{6B}{g}} 
	\end{equation*}
	Note that this map is well-defined on a subset of full measure: The ghost double of $\alpha$ would not be defined only if there is a saddle connection of shorter length with the same direction (which only happens for a set of measure zero) or the ghost double defines a saddle connection which is homologous to $\alpha$ (which would mean that we are in $\calH_\homsc$).
	Here, $M_\closedgeod$ is the combinatorial set of order $g^2$ that expresses the choices of zeros that are collapsed into each other.
	We can estimate the volume of $ \calH_{\twoloop} \cup \calH_\twotoone$ using Equations~\eqref{eq:volume_loops_order-2} and \eqref{eq:volume_order-2} as
	\begin{equation*}
		\frac{ \Vol\left( \calH_{\twoloop} \cup \calH_\twotoone \right) }{\Vol(\calH_g(2,1,\ldots,1))}
		= O\left( \frac{B^2}{g^2} \right) + O\left( g \cdot \frac{B^2}{g^2} \right) 
		= B^2 \cdot O\left( \frac{1}{g} \right) 
		.
	\end{equation*}
	Therefore, we obtain with Equation~\eqref{eq:volume_stratum}
	\begin{equation*}
		\frac{ \Vol (\calH_\closedgeod \setminus (\calH_\anyloop \cup \calH_\homsc)) }{ \Vol (\calH_g) }
		= O(1) \cdot |M_\closedgeod|
		\cdot \frac{ \Vol \left( \calH_{\twoloop} \cup \calH_\twotoone \right) }{ \Vol (\calH_g) }
		\cdot \Vol \left( \mathbb{D}_{\sfrac{6B}{g}} \right)
		= B^4 \cdot O\left( \frac{1}{g} \right).
	\end{equation*}
	
	Finally, with Equations~\eqref{eq:volume_loop} and~\eqref{eq:volume_hom}, we have
	\begin{align*}
		\frac{\Vol(\calH_{\rm NG})}{\Vol(\calH_g)}
		& \leq \frac{ \Vol(\calH_\anyloop )}{\Vol( \calH_g)}
		+ \frac{ \Vol (\calH_{\cherry} \setminus (\calH_\anyloop \cup \calH_\closedgeod ))}{ \Vol (\calH_g) }
		+ \frac{ \Vol (\calH_\closedgeod \setminus (\calH_\anyloop \cup \calH_\homsc)) }{ \Vol (\calH_g) }
		+  \frac{ \Vol ( \calH_\homsc) }{ \Vol (\calH_g) } \\
		& = B^2 \cdot O\left( \frac1g \right)
		+ B^4 \cdot O\left( \frac{1}{g} \right) 
		+ B^4 \cdot O\left( \frac{1}{g} \right) 
		+ B^2 \cdot O\left( \frac{1}{g^2} \right) \\
		& = \max\{B^2, B^4\} \cdot O\left( \frac1g \right)
		. \qedhere
	\end{align*}
\end{proof}

We will also need a variation of \cref{Prop:generic-is-generic} for strata different from the principal stratum, specifically for the strata to which translation surfaces belong after collapsing along $K$ saddle connections.

\begin{definition}
	Fix $B \in \mathbb{R}_+$ as well as $K \in \NN, K <g$ and let $\calH' = \calH_g(2, \dots, 2, 1, \dots, 1)$ be the stratum with~$K$ zeros of order $2$ and $2g-2-2K$ zeros of order $1$.
	
	We define the following subsets of $\calH'$:
	\begin{align*} 
		\calH'_{\oneloop} & \coloneqq
		\left\{ \begin{aligned}
		(X,\omega) \in \calH' : \, & X  \text{ contains a loop of length at most  $\sfrac{6B}{g}$} \\
		& \qquad \text{starting at a zero of order } 1
		\end{aligned} \right\} \\
		\calH'_\cherry &\coloneqq 
		\left\{ \begin{aligned}
			(X,\omega) \in \calH' : \, & X \text{ contains a saddle connection of length at most $\sfrac{B}{g}$} \\
			& \qquad \text{and a saddle connection of length at most $\sfrac{2B}{g}$}\\
			& \qquad \text{that share a zero where all zeros are of order $1$}
		\end{aligned}
		\right\} \\
		\calH'_{\twoloop} & \coloneqq
		\left\{ \begin{aligned}
			(X,\omega) \in \calH' : \, & X  \text{ contains a loop of length at most  $\sfrac{2B}{g}$} \\
			& \qquad \text{starting at a zero of order } 2
		\end{aligned} \right\} \\
		\calH'_\twotostar &\coloneqq 
		\left\{ \begin{aligned}
			(X,\omega) \in \calH' : \, & X \text{ contains a saddle connection of length at most $\sfrac{2B}{g}$ } \\
			& \qquad \text{from a zero of order $2$ to another zero}
		\end{aligned}
		\right\}
	\end{align*}
	
	Similarly to before, we define the \emph{non-generic subset} of $\calH'$ to be
	\begin{equation*}
		\calH'_{\rm NG} = \calH'_\oneloop \cup \calH'_{\cherry} \cup \calH'_\twoloop \cup \calH'_\twotostar
	\end{equation*}
	and the \emph{generic subset} to be the complement of the non-generic subset: $\calH_{\rm Gen}' = \calH' \setminus \calH'_{\rm NG}$. 
\end{definition}

We can prove as in \cref{Prop:generic-is-generic} that  the volume of $\calH'_{\rm NG}$ is small:

\begin{proposition} \label{prop:generic-prime-is-generic}
	For fixed $K\in \NN$, we have
	\begin{equation*}
		\frac{\Vol(\calH_{\rm Gen}' )}{\Vol(\calH')} \to 1 \qquad\text{as}\qquad g \to \infty. 
	\end{equation*}
	More specifically, 
	\[
	\frac{\Vol(\calH_{\rm NG}') }{\Vol(\calH')}
	= O\left( \frac{K}{g} \right)
	\]
	where the implied constant does not depend on $K$ or $g$.
\end{proposition}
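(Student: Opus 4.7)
The plan is to mimic the proof of \cref{Prop:generic-is-generic} in the context of the stratum~$\calH'$, bounding each of the four subsets that comprise $\calH'_{\rm NG}$ separately by a quantity of order $O(\sfrac{K}{g})$ with implicit constant depending only on $B$. Three of these, $\calH'_\oneloop$, $\calH'_\twoloop$, and $\calH'_\twotostar$, are single-configuration sets and yield directly to the Siegel--Veech formula. The fourth, $\calH'_\cherry$, requires the simultaneous collapsing construction of \cref{sec:collapsing}, combined with two auxiliary sets (for homologous pairs and for short closed geodesics) to exclude configurations in which the saddle connections or their ghost doubles intersect.

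First, apply \cref{thm:siegel-veech-formula} with the indicator of an appropriate disk to each of the three single-configuration sets. For $\calH'_\oneloop$, the multiplicity is the number $2g-2-2K$ of zeros of order $1$ and the Siegel--Veech constant is $4(1+O(\sfrac{1}{g}))$ by \eqref{eq:loop}, giving $\Vol(\calH'_\oneloop)/\Vol(\calH') = O(B^2(g-K)/g^2) = O(\sfrac{KB^2}{g})$ using $K \geq 1$ and $g-K \leq g$. For $\calH'_\twoloop$, the multiplicity is $K$ and the constant is $9(1+O(\sfrac{1}{g}))$, contributing $O(\sfrac{KB^2}{g^2})$. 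For $\calH'_\twotostar$, split by the order of the second zero: the order-$2$-to-order-$1$ case has multiplicity $K(2g-2-2K)$ and Siegel--Veech constant $6(1+O(\sfrac{1}{g}))$ by \eqref{Eq:sc}, while the order-$2$-to-order-$2$ case has multiplicity $K(K-1)$ and constant $9(1+O(\sfrac{1}{g}))$; together these contribute $O(\sfrac{KB^2}{g})$.

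For the cherry set, introduce the auxiliary subsets $\calH'_\homsc \subseteq \calH'$ of pairs of homologous saddle connections of length at most $\sfrac{B}{g}$ between distinct zeros of order $1$, and $\calH'_\closedgeod \subseteq \calH'$ of surfaces containing a closed geodesic of length at most $\sfrac{6B}{g}$. By \eqref{Eq:hom}, $\Vol(\calH'_\homsc)/\Vol(\calH') = O(\sfrac{B^2}{g^2})$. On $\calH'_\cherry \setminus (\calH'_\oneloop \cup \calH'_\closedgeod)$, the simultaneous two-saddle-connection collapse from \cref{sec:collapsing} defines a finite-to-$1$, locally measure-preserving map into $M_{\cherry} \times \calH_g(3, 2, \ldots, 2, 1, \ldots, 1) \times \DD_{\sfrac{B}{g}} \times \DD_{\sfrac{2B}{g}}$, where the target stratum has one zero of order $3$, $K$ zeros of order $2$, and $2g-5-2K$ zeros of order $1$. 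By \eqref{eq:volume_stratum}, the ratio of its volume to $\Vol(\calH')$ is $2(1+O(\sfrac{1}{g}))$, the combinatorial factor is $|M_{\cherry}|=O((g-K)^3)$, and the disk volumes contribute $O(\sfrac{B^4}{g^4})$, yielding $O(\sfrac{KB^4}{g})$ in total. On $\calH'_\closedgeod \setminus (\calH'_\oneloop \cup \calH'_\twoloop \cup \calH'_\twotostar \cup \calH'_\homsc)$, collapsing the shortest saddle connection on the shortest closed geodesic lands in $\calH_g(2, \ldots, 2, 1, \ldots, 1)$ with $K+1$ zeros of order $2$; as in the proof of \cref{Prop:generic-is-generic}, the image closed geodesic either forms a loop at the new order-$2$ zero or contains a saddle connection of length at most $\sfrac{3B}{g}$ from that zero to another zero, and the relevant subset of the new stratum has relative volume $O(\sfrac{KB^2}{g})$ by the Siegel--Veech estimates of the previous paragraph. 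Combined with the combinatorial factor $O((g-K)^2)$, the volume ratio of the new stratum to $\calH'$ (a universal constant by \eqref{eq:volume_stratum}), and a disk of radius $\sfrac{6B}{g}$, this contributes $O(\sfrac{KB^4}{g})$. Summing the four bounds proves the claim.

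The main obstacle will be verifying that each collapsing map is well-defined on a set of full measure, i.e.\ that whenever two of the short saddle connections or their ghost doubles intersect, the surface already belongs to one of the auxiliary sets $\calH'_\oneloop$, $\calH'_\twoloop$, $\calH'_\closedgeod$, or $\calH'_\homsc$ that has already been bounded. This is analogous to the corresponding verification in the proof of \cref{Prop:generic-is-generic}, but requires a slightly more careful case analysis because zeros of both orders $1$ and $2$ can now occur as endpoints of the relevant saddle connections or along their ghost doubles.
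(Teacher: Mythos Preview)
Your approach is the same as the paper's: reduce to the argument of \cref{Prop:generic-is-generic}, bounding $\calH'_{\twoloop}$ and $\calH'_{\twotostar}$ directly by Siegel--Veech and handling $\calH'_{\cherry}\cup\calH'_{\oneloop}$ via the cherry and closed-geodesic collapses, with the extra factor of $K$ arising in the closed-geodesic step because the target stratum now carries $K+1$ zeros of order~$2$.

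One refinement to your closed-geodesic step: excluding $\calH'_{\twoloop}$ and $\calH'_{\twotostar}$ (both with threshold $\sfrac{2B}{g}$) from $\calH'_{\closedgeod}$ does \emph{not} force the shortest saddle connection on the closed geodesic to join two order-$1$ zeros, since saddle connections on a closed geodesic of length $\leq\sfrac{6B}{g}$ may individually be as long as $\sfrac{6B}{g}$. So the collapse need not land in the stratum with $K+1$ double zeros as you assert. The fix is easy and is exactly the ``more careful case analysis'' you flag at the end: either enlarge the thresholds in the excluded sets to $\sfrac{6B}{g}$ (as the paper does with $\calH_{\twoloop}$ and $\calH_{\twotoone}$ in the proof of \cref{Prop:generic-is-generic}), or bound separately the subset of $\calH'_{\closedgeod}$ where the closed geodesic meets an order-$2$ zero---a single Siegel--Veech estimate over the $K$ choices of that zero gives $O(\sfrac{KB^2}{g})$, and this is precisely where the factor $K$ enters.
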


\begin{proof}
	Similar to  Equations~\eqref{eq:volume_loops_order-2} and \eqref{eq:volume_order-2}, we can use Equations~\eqref{eq:loop} and~\eqref{Eq:sc} to obtain
	\begin{equation*}
		\frac{\Vol(\calH'_\twoloop \cup \calH'_\twotostar ) }{\Vol(\calH')}
		=	9B^2 \cdot O\left( K \cdot \frac{1}{g^2} \right) 
		+ 6B^2 \cdot O\left( K g \cdot \frac{1}{g^2} \right)
		+ 9B^2 \cdot O\left( K^2 \cdot \frac{1}{g^2} \right)
		,
	\end{equation*}
	where the third (and first) term is dominated by the second as $K< g$.
	
	Furthermore, as in the proof of \cref{Prop:generic-is-generic}, we can show that
	\begin{equation*}
		\frac{\Vol(\calH'_{\cherry} \cup \calH'_\oneloop )}{\Vol(\calH')} = B^4 \cdot K \cdot O \left( \frac1g \right) + B^2 \cdot O\left( \frac{1}{g^2}\right)
		.
	\end{equation*}
	The additional factor of $K$ comes from the fact that we now have $K+1$ zeros of order~$2$ in~${\calH'}_{\twoloop} \cup {\calH'}_\twotostar$ instead of only one in $\calH_{\twoloop} \cup \calH_\twotoone$.
	
	In summary, we can deduce that 
	\begin{equation*}
		\frac{\Vol(\calH'_{\rm NG})}{\Vol(\calH')} = \max\{B^2, B^4\} \cdot K \cdot O \left( \frac1g \right) 
		\qquad\text{and}\qquad
		\frac{\Vol(\calH_{\rm Gen}' )}{\Vol(\calH')} \to 1. 
		\qedhere
	\end{equation*} 
\end{proof}

\section{Simultaneous collapsing} \label{sec:collapsing_simultaneously}

Fix now $0 < a_1 < b_1 \leq a_2 <b_2 \leq \dots \leq a_k <b_k \in \RR_+$ and choose $B \coloneqq b_k$.
Furthermore, fix $r_1,\dots, r_k \in \NN$ and define $K \coloneqq \sum_{i=1}^k r_i$.
To apply \cref{Thm:moments}, we have to determine 
\begin{equation*}
	\int_{\Hgen} (N_{g,[a_1,b_1]})_{r_1} \cdot \ldots \cdot (N_{g,[a_k,b_k]})_{r_k}
	.
\end{equation*}
Instead of calculating this integral directly, we replace $\Hgen$ by a feasible cover $\widehat \calH$, such that we can replace counting saddle connections on elements of $\Hgen$ by calculating the volume of $\widehat \calH$.

For this, let $\widehat \calH$ be the set of tuples~$(X, \omega, \Gamma)$ where 
$(X, \omega) \in \Hgen$, $\Gamma=(\Gamma_1, \dots, \Gamma_k)$  and 
$\Gamma_i=(\alpha_{i,1}, \dots, \alpha_{i, r_i})$
is an ordered list of disjoint oriented saddle connections with lengths in the interval $[\sfrac{a_i}g,  \sfrac{b_i}g]$ for every $i\in \{1,\ldots,k\}$. 
We sometimes think of  $\Gamma$ as a set and refer to a saddle connection~$\alpha \in \Gamma$,
which means $\alpha$ belongs to some~$\Gamma_i \in \Gamma$. 

Note that none of the saddle connections in $\Gamma$ is a loop and that no two saddle connections in $\Gamma$ or their ghost doubles intersect each other:
If the latter happened, there would exist another saddle connection of length at most $\sfrac{2B}{g}$ that shares a zero with one of the saddle connections in~$\Gamma$, so $(X, \omega)$ would be in $\calH_{\rm NG}$.
Therefore, we can simultaneously collapse all pairs of zeros that are connected by saddle connections in $\Gamma$ to obtain a~map
\[
\Phi \from \widehat \calH \to M_{r_1, \dots, r_k} \times \calH' \times \prod_{i=1}^k A_{a_i, b_i}^{r_i}
 . 
\]
Here $\calH' = \calH_g(2, \dots, 2, 1, \dots, 1)$ is the stratum of translation surfaces of genus $g$ with~$K$ zeros of order $2$ and the rest of order $1$. The set $M_{r_1, \dots, r_k}$ consists of the possible combinatorial data needed to record the labels of the (ordered) pairs of zeros of order $1$ in~$\widehat \calH$ which give rise to zeros of order $2$ in $\calH'$.
Finally, the set $A_{a_i, b_i} \subseteq \RR^2$ is an annulus in~$\RR^2$ centred at the origin with inner radius $\sfrac{a_i}{g}$ and outer radius $\sfrac{b_i}{g}$ and the notation $A_{a_i, b_i}^{r_i}$ means that we take~$r_i$ copies of this annulus.

 With the same arguments as for the map in~\eqref{eq:measure-preserving-map}, the map $\Phi$ is $3^K$--to--$1$ and locally measure-preserving.
We use this now to estimate the volume of $\widehat{\calH}$.

\begin{lemma} \label{lem:image-phi}
	For $\widehat{\calH}$ as above, we have
	\begin{equation*}
		\Vol( \widehat{\calH}) = 3^K \cdot \Vol( \calH_{\rm Gen}' ) \cdot \left( 1 + K \cdot O \left( \frac1g \right)\right) \cdot \prod_{i=1}^k  \left( \frac{ \pi(b_i^2-a_i^2)}{g^2}\right)^{r_i} \cdot |M_{r_1, \ldots, r_k}| 
	\end{equation*}
	where the implied constant does not depend on $K$ or $g$.

\begin{proof}
	For any $(X', \omega') \in \calH_{\rm Gen}' $, any element of $M_{r_1, \dots, r_k}$ and any $K$--tuple of vectors in 
	$\prod_{i=1}^k A_{a_i, b_i}^{r_i}$, 
	the $K$ zeros of order $2$ can be opened up (in $3^K$ different ways) in the direction of vectors in the chosen tuple and 
	labelled according to the element of $M_{r_1, \dots, r_k}$ to obtain~$(X, \omega)$ in~$\Hgen$.
	Furthermore, by choosing the oriented saddle connections in $X$ obtained from opening up zeros of order~$2$ as the elements of the tuples~$\Gamma_i$, we obtain a point in $\widehat \calH$. That is, the chosen point $(X', \omega')$ together with the further data is in the image of $\Phi$. 
	
	Therefore, we have
	\begin{equation*}
		\calH_{\rm Gen}' \times \prod A_{a_i, b_i}^{r_i} \times M_{r_1, \dots, r_k}
		\subseteq \Phi( \widehat \calH)
		\subseteq \calH' \times \prod A_{a_i, b_i}^{r_i} \times M_{r_1, \dots, r_k}
		.
	\end{equation*}
	
	As the map $\Phi$ is $3^K$--to--$1$, locally measure-preserving and nearly onto as just described and with the estimates on the volume of the complement of $\calH_{\rm Gen}'$ from \cref{prop:generic-prime-is-generic}, we obtain the claimed volume estimate.
\end{proof}
\end{lemma}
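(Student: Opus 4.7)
The plan is to exploit the map $\Phi$, which is already established to be $3^K$-to-$1$ and locally measure-preserving. Since the formula multiplies out cleanly, the heart of the argument is to sandwich the image $\Phi(\widehat{\calH})$ between $\calH_{\rm Gen}' \times \prod_{i=1}^k A_{a_i,b_i}^{r_i} \times M_{r_1,\dots,r_k}$ and the full product $\calH' \times \prod_{i=1}^k A_{a_i,b_i}^{r_i} \times M_{r_1,\dots,r_k}$. The upper inclusion is tautological from how $\Phi$ is defined; the lower inclusion is the step that requires work.

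To establish the lower inclusion, I would start with an arbitrary point $(X',\omega') \in \calH_{\rm Gen}'$, a tuple of holonomy vectors in $\prod_i A_{a_i,b_i}^{r_i}$, and combinatorial data in $M_{r_1,\dots,r_k}$. Reversing the collapsing construction of \cref{sec:collapsing}, I can open up each of the $K$ zeros of order $2$ in $(X',\omega')$ along the chosen holonomy vector and according to the specified labelling, yielding a translation surface $(X,\omega) \in \calH_g$ together with an ordered list $\Gamma$ of disjoint oriented saddle connections whose lengths lie in the prescribed intervals. The decisive point is that $(X,\omega)$ then lies in $\Hgen$: a short loop based at a new zero of order $1$ in $(X,\omega)$ would have come from a short loop at a zero of order $2$ in $(X',\omega')$, and a short cherry configuration at a new zero of order $1$ would have required a short saddle connection emanating from a zero of order $2$ in $(X',\omega')$. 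Both possibilities are excluded because $(X',\omega')\notin \calH'_{\rm NG} = \calH'_\oneloop\cup\calH'_\cherry\cup\calH'_\twoloop\cup\calH'_\twotostar$, so $(X,\omega,\Gamma) \in \widehat{\calH}$ as required, and each $3^K$ choice in the opening yields a distinct preimage.

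Combining this sandwich with the $3^K$-to-$1$, locally measure-preserving property of $\Phi$ gives
\begin{equation*}
3^K \cdot \Vol(\calH_{\rm Gen}') \cdot \prod_{i=1}^k \Vol(A_{a_i,b_i})^{r_i} \cdot |M_{r_1,\dots,r_k}|
\leq \Vol(\widehat{\calH})
\leq 3^K \cdot \Vol(\calH') \cdot \prod_{i=1}^k \Vol(A_{a_i,b_i})^{r_i} \cdot |M_{r_1,\dots,r_k}|.
\end{equation*}
Using $\Vol(A_{a_i,b_i}) = \pi(b_i^2-a_i^2)/g^2$, and invoking \cref{prop:generic-prime-is-generic} to write $\Vol(\calH') = \Vol(\calH_{\rm Gen}') \cdot (1 + K\cdot O(\sfrac{1}{g}))$, both bounds collapse to the claimed expression.

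The only genuine obstacle is the verification in the middle paragraph that opening up produces a \emph{generic} surface rather than merely a point of $\calH_g$; this is exactly what the non-generic subset $\calH'_{\rm NG}$ was tailored to rule out, so once one matches each clause of \cref{Def:generic} with the corresponding clause defining $\calH'_{\rm NG}$, the check is routine.
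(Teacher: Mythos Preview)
Your proposal is correct and follows essentially the same approach as the paper: sandwich $\Phi(\widehat\calH)$ between $\calH_{\rm Gen}'\times\prod A_{a_i,b_i}^{r_i}\times M_{r_1,\dots,r_k}$ and the full product, then use that $\Phi$ is $3^K$-to-$1$ and locally measure-preserving together with \cref{prop:generic-prime-is-generic}. You in fact supply somewhat more justification than the paper does for the key step---that opening up a point of $\calH_{\rm Gen}'$ lands in $\Hgen$---by explicitly matching the clauses of $\calH'_{\rm NG}$ against those of $\Hng$.
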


\section{The Length Spectrum} \label{sec:length_spectrum}
In this section, we check the assumptions of \cref{Thm:moments} for the set $\Hgen$. 
As before, fix $0 < a_1 < b_1 \leq a_2 <b_2 \leq \dots \leq a_k <b_k \in \RR_+$. 
For positive integers $r_1, \dots , r_k \in \NN$, define 
\[
Y_{g,r_1,...,r_k} := (N_{g,[a_1,b_1]})_{r_1} \dots (N_{g,[a_k,b_k]})_{r_k} \from \calH_g \to \NN_0. 
\]
Then $Y_{g,r_1,...,r_k}$ counts the number of (ordered) lists of length $k$ where the $i$--th item is an ordered 
$r_i$--tuple of saddle connections with lengths in $\left[\sfrac{a_i}g, \sfrac{b_i}g \right]$ on a surface  $(X, \omega) \in \calH_g$. 

Define
\[
\EE_{\rm Gen}(\param) = \frac{1}{\Vol(\Hgen)} \int_{\Hgen} \param. 
\]

\begin{proposition} \label{Prop:E}
For fixed $0 < a_1 < b_1 \leq \dots \leq a_k <b_k \in \RR_+$, we have
\[
\lim_{g \to \infty} \EE_{\rm Gen}(Y_{g,r_1,\dots,r_k})= \prod_{i=1}^k \lambda_{[a_i, b_i]}^{r_i}
\qquad \text{with} \qquad
\lambda_{[a_i, b_i]}= 8 \pi(b_i^2-a_i^2)
.
\]
\end{proposition}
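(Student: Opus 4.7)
The plan is to identify $\int_{\Hgen} Y_{g,r_1,\dots,r_k}\,d\Vol$ with $\Vol(\widehat{\calH})/2^K$ and then apply \cref{lem:image-phi} together with the volume formula \eqref{eq:volume_stratum} and \cref{Prop:generic-is-generic} and \cref{prop:generic-prime-is-generic} to extract the limit. All the powers of $2$, $3$, and $4$ from orientation, the sheeted cover $\Phi$, the ratio of stratum volumes, and the label count combine into the expected $8^K\pi^K$.

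First, I would observe that on $\Hgen$, any $k$--tuple $\Gamma=(\Gamma_1,\ldots,\Gamma_k)$ of ordered lists of saddle connections with lengths in the prescribed intervals is automatically a valid configuration for $\widehat{\calH}$: the saddle connections are not loops (otherwise $(X,\omega)\in\calH_\anyloop$) and do not share a zero (otherwise $(X,\omega)\in\calH_\cherry$). Since each of the $K=\sum r_i$ saddle connections in a configuration of $\widehat{\calH}$ is oriented, each unoriented tuple counted by $Y_{g,r_1,\ldots,r_k}(X,\omega)$ contributes exactly $2^K$ points to the fiber of $\widehat{\calH}\to\Hgen$ over $(X,\omega)$, hence
\[
\Vol(\widehat{\calH}) = 2^K\int_{\Hgen} Y_{g,r_1,\ldots,r_k}\,d\Vol.
\]

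Next, I would combine this identity with \cref{lem:image-phi} and divide by $\Vol(\Hgen)$. \cref{Prop:generic-is-generic} and \cref{prop:generic-prime-is-generic} give $\Vol(\Hgen)/\Vol(\calH_g)=1+O(1/g)$ and $\Vol(\calH_{\rm Gen}')/\Vol(\calH')=1+O(1/g)$ for fixed $K$, so the remaining task is to compute $\Vol(\calH')/\Vol(\calH_g)$ from \eqref{eq:volume_stratum}. For $\calH'=\calH_g(2,\ldots,2,1,\ldots,1)$ with $K$ zeros of order $2$, this ratio equals
\[
\frac{\Vol(\calH')}{\Vol(\calH_g)} = \frac{2^{2g-2}}{3^K\cdot 2^{2g-2-2K}}\,(1+O(1/g)) = \left(\frac{4}{3}\right)^{\!K}(1+O(1/g)).
\]
The prefactors then combine as $(3/2)^K\cdot(4/3)^K = 2^K$, yielding
\[
\EE_{\rm Gen}(Y_{g,r_1,\ldots,r_k}) = 2^K\cdot|M_{r_1,\ldots,r_k}|\cdot\prod_{i=1}^k\left(\frac{\pi(b_i^2-a_i^2)}{g^2}\right)^{\!r_i}\cdot(1+O(1/g)).
\]

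Finally, I would count $|M_{r_1,\ldots,r_k}|$: it records the ordered pair of labels, chosen from the $2g-2$ order-$1$ zeros of $\Hgen$, that gets collapsed at each of the $K$ positions $(i,j)$, with all $2K$ labels distinct across positions (generalizing the count $(2g-2)(2g-3)$ stated after \eqref{eq:measure-preserving-map} for $K=1$). Hence
\[
|M_{r_1,\ldots,r_k}| = \frac{(2g-2)!}{(2g-2-2K)!} = 4^K g^{2K}(1+O(1/g)).
\]
Substituting, and using $\sum r_i=K$ so that $\prod_{i=1}^k\pi^{r_i}=\pi^K$, gives
\[
\EE_{\rm Gen}(Y_{g,r_1,\ldots,r_k}) = 8^K\pi^K\prod_{i=1}^k(b_i^2-a_i^2)^{r_i}(1+O(1/g)) = \prod_{i=1}^k\lambda_{[a_i,b_i]}^{r_i}(1+O(1/g)),
\]
which proves the claim as $g\to\infty$. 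There is no substantial obstacle beyond the bookkeeping: the main risk is mis-identifying one of the factors of $2$, $3$, or $4$, so I would double-check the orientation factor and the label count against the explicit $K=1$ case from \eqref{eq:measure-preserving-map}, where $\EE_{\rm Gen}(Y_{g,1})\to 8\pi(b^2-a^2)$ matches the Siegel--Veech calculation behind \eqref{Eq:sc}.
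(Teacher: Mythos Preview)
Your proposal is correct and follows essentially the same route as the paper's proof: both identify $\EE_{\rm Gen}(Y_{g,r_1,\dots,r_k})$ with $\Vol(\widehat{\calH})/(2^K\Vol(\Hgen))$, invoke \cref{lem:image-phi}, compute $|M_{r_1,\dots,r_k}|=(2g-2)!/(2g-2-2K)!$, and combine the prefactors $(3/2)^K\cdot(4/3)^K\cdot 4^K=8^K$ using \eqref{eq:volume_stratum} and \cref{Prop:generic-is-generic,prop:generic-prime-is-generic}. Your explicit justification that on $\Hgen$ every tuple counted by $Y$ automatically yields disjoint, non-loop saddle connections (hence a point of $\widehat{\calH}$) is exactly the observation the paper records just after defining $\widehat{\calH}$.
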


\begin{proof}
Let $K \coloneqq \sum_{i=1}^k r_i$ as before.
For every $(X,\omega) \in \Hgen$, every suitable set of $K$ saddle connections gives rise to $2^K$ elements in $\widehat \calH$ with different choices of orientations for the saddle connections in $\Gamma$. Hence we have 
\[
\EE_{\rm Gen}(Y_{g,r_1,\dots,r_k}) = \frac 1{\Vol{\Hgen}} \cdot \frac{1}{2^K} \int_{\widehat \calH} \One. 
\]

The volume estimate from \cref{lem:image-phi} implies
\begin{equation*}
	\int_{\widehat \calH} \One = 3^K \cdot \Vol(\calH_{\rm Gen}')
	\cdot \prod_{i=1}^k  \left( \frac{\pi(b_i^2-a_i^2)}{g^2}\right)^{r_i}
	\cdot |M_{r_1, \ldots, r_k}| 
	\cdot \left( 1 + K \cdot O \left(\frac1g \right) \right)
\end{equation*}
with the notation as in \cref{sec:collapsing_simultaneously}.

An element in $M_{r_1, \ldots, r_k}$ is a way to choose $2K$ zeros (with labels) successively out of a set of $2g-2$ zeros with labels, hence
\[
|M_{r_1, \dots, r_k}| 
= \frac{ (2g-2)! }{(2g-2-2K)!}
.
\]
Combining these formulas, we get
\begin{equation*}
	\EE_{\rm Gen} (Y_{g,r_1,\dots,r_k}) 
	 =  \frac{\Vol(\calH_{\rm Gen}')}{\Vol(\Hgen)}
	\cdot \frac{3^K}{2^K}
	\cdot \frac{ (2g-2)! }{ (2g-2-2K)!} 
	\cdot \prod_{i=1}^k  \left(  \frac{\pi(b_i^2-a_i^2)}{g^2}\right)^{r_i} 
	\cdot \left( 1 + K \cdot O \left( \frac1g \right) \right)
	.
\end{equation*}
Furthermore, we have from \cref{Prop:generic-is-generic,prop:generic-prime-is-generic} and Equation~\eqref{eq:volume_stratum} that
\begin{equation*}
	\lim_{g \to \infty } \frac{\Vol(\calH_{\rm Gen}')}{\Vol(\Hgen)}
	= \lim_{g \to \infty} \frac{\Vol(\calH')}{\Vol(\calH_g)}
	= \frac{4}{3^K \cdot 2^{2g-2-2K}} \cdot \frac{2^{2g-2}}{4}
	= \frac{2^{2K}}{3^K} = \left( \frac43\right)^K. 
\end{equation*}
Taking the limit of $\EE_{\rm Gen} (Y_{g,r_1,\dots,r_k})$ for $g\to\infty$ and using $K = \sum_{i=1}^k r_i$ yields
\begin{align*}
	\lim_{g \to \infty} \EE_{\rm Gen} (Y_{g,r_1,\dots,r_k}) 
	& = \lim_{g \to \infty} \left( \frac43\right)^K
	\cdot \frac{3^K}{2^K}
	\cdot  (2g)^{2K} 
	\cdot \frac{1}{g^{2K}}
	\cdot \prod_{i=1}^k  \left(  \pi(b_i^2-a_i^2) \right)^{r_i} \\
	& = \prod_{i=1}^k  \left(  8 \pi(b_i^2-a_i^2) \right)^{r_i}
	. \qedhere
\end{align*}
\end{proof}

\section{ From the generic subset to the whole stratum} \label{sec:finish}

Let $\PP(\param)$ denote the probability of an event in $\calH_g$ and let $\PP_{\rm Gen}(\param)$ denote the probability of an event in $\Hgen$. For a random variable $N \from \calH_g \to \NN_0$ and $k \in \NN$, we have 
\[
\PP_{\rm Gen} (N=k) =  \frac{\Vol \Big\{ (X, \omega) \in \Hgen \ST N(X, \omega) =k \Big\} }{ \Vol(\Hgen) }
. 
\]
To compare $\PP(\param)$ and $\PP_{\rm Gen}(\param)$, note that
\begin{align*} 
	\Vol \Big\{ (X, \omega) \in \calH_g \ST N(X, \omega) =k \Big\} - \Vol(\Hng)
	& \leq \Vol \Big\{ (X, \omega) \in \Hgen \ST N(X, \omega) =k \Big\} \\
	& \leq  \Vol \Big\{ (X, \omega) \in \calH_g \ST N(X, \omega) =k \Big\}
	.
\end{align*} 
From \cref{Prop:generic-is-generic}, we know that 
\[
\frac{\Vol(\Hgen)}{ \Vol(\calH_g) } \to 1 
\qquad\text{and}\qquad \frac{\Vol(\Hng)}{ \Vol(\calH_g) } \to 0
\qquad\text{as}\qquad g \to \infty.
\]
Hence, if $\lim_{g \to \infty} \PP_{\rm Gen} (N=k)$ exists, so does $\lim_{g \to \infty} \PP (N=k)$ and the limits
are equal. The same holds for any event involving multiple random variables. 

In the case discussed here,  from \cref{Prop:E} and \cref{Thm:moments}, it follows that 
\[
\lim_{g \to \infty} \PP_{\rm Gen}
\left( N_{g,[a_1,b_1]} = n_1,..., N_{g,[a_k,b_k]} = n_k \right) = 
\prod_{i=1}^k \frac{\lambda_{[a_i, b_i]}^{n_i} e^{-\lambda_{[a_i,b_i]}}}{n_i!}
.
\]
Therefore, 
\[
\lim_{g \to \infty} \PP
\left( N_{g,[a_1,b_1]} = n_1,..., N_{g,[a_k,b_k]} = n_k \right) = 
\prod_{i=1}^k \frac{\lambda_{[a_i, b_i]}^{n_i} e^{-\lambda_{[a_i,b_i]}}}{n_i!}
.
\]
This finishes the proof of \cref{Thm:Main}.

\section{Proof of \texorpdfstring{\cref{thm:slow-growing_exceptions,thm:order_m}}{the variations of the main theorem}} \label{sec:variations}

In this section, we outline how to prove the two versions of the main theorem for other strata than the principal stratum.

We start with $\calH_g(m,\ldots,m)$ where $m$ is a fixed number that divides $2g-2$.
To adapt the proof of \cref{Prop:generic-is-generic} to this situation, we replace all the zeros of order~$1$ by zeros of order $m$. The collapsing procedure still works when considering all of the potential~$m$ ghost doubles simultaneously. As $m$ is fixed, the values of the Siegel--Veech constants and the combinatorial constants are changed but not the order of $g$ in these terms. Hence, the statement of \cref{Prop:generic-is-generic} and the analogous version of \cref{prop:generic-prime-is-generic} is also true for~$\calH_g(m, \ldots, m)$.

The map $\Phi$ from \cref{sec:collapsing_simultaneously} is replaced by the map
\begin{equation*}
	\widehat \calH \to M_{r_1, \dots, r_k} \times \calH'' \times \prod_{i=1}^k A_{a_i, b_i}^{r_i}
	,
\end{equation*}
where $\calH'' = \calH_g(2m,\ldots,2m,m,\ldots,m)$ is the stratum of translation surfaces of genus $g$ with~$K$ zeros of order $2m$ and the remaining zeros of order $m$. Note that this map is $(2m+1)^K$--to--$1$ which changes the volume of $\widehat{\calH}$ to
\begin{equation*}
	\Vol( \widehat{\calH}) = (2m+1)^K \cdot \Vol( \calH_{\rm Gen}'' ) \cdot \left( 1 + K \cdot O \left( \frac1g \right)\right) \cdot \prod_{i=1}^k  \left( \frac{\pi(b_i^2-a_i^2)}{g^2}\right)^{r_i} \cdot |M_{r_1, \ldots, r_k}| 
	.
\end{equation*}
With this, we can now do the same calculation as in \cref{Prop:E}. In particular, we have
\begin{equation*}
|M_{r_1, \dots, r_k}| 
= \frac{ \left( \frac{2g-2}{m} \right)! }{\left( \frac{2g-2}{m}-2K \right)!}
\end{equation*}
and
\begin{equation*}
	\lim_{g \to \infty } \frac{\Vol(\calH_{\rm Gen}'')}{\Vol(\Hgen)}
	= \frac{4}{(2m+1)^K \cdot (m+1)^{\frac{2g-2}{m}-2K}} \cdot \frac{(m+1)^{\frac{2g-2}{m}}}{4}
	= \left( \frac{(m+1)^2}{2m+1}\right)^K. 
\end{equation*}
Hence with $K = \sum_{i=1}^k r_i$, we obtain
\begin{align*}
	\lim_{g \to \infty} \EE_{\rm Gen} (Y_{g,r_1,\dots,r_k}) 
	& = \! \lim_{g \to \infty} \! \left( \frac{(m+1)^2}{2m+1} \right)^K
	\!\!\! \cdot \frac{(2m+1)^K}{2^K}
	\! \cdot \! \left( \frac{2g-2}{m} \right)^{2K} 
	\!\!\! \cdot \! \frac{1}{g^{2K}}
	\cdot \! \prod_{i=1}^k  \left(  \pi(b_i^2-a_i^2) \right)^{r_i} \\
	& = \prod_{i=1}^k  \left( \left( \frac{m+1}{m} \right)^2 2\pi(b_i^2-a_i^2) \right)^{r_i}
\end{align*}
which proves \cref{thm:order_m}.

For $\calH_g(m_1, m_2, \dots, m_{\ell(g)}, 1, \dots, 1)$ where $m_i \leq f(g)$, $i=1, \dots, \ell(g)$, we consider the following:
We use Equation~\eqref{Eq:sc} to show that the volume proportion of translation surfaces with a saddle connection of length at most $\sfrac{B}{g}$ from a non-simple zero to any other zero is at most of the order of
\begin{equation*}
	(f(g)+1)^2 \cdot \ell(g) \cdot g \cdot \left( \frac{B}{g} \right)^2 = O \left( \frac{f(g)^2 \cdot \ell(g)}{g} \right)
	.
\end{equation*}
Therefore, if $\frac{f(g)^2 \cdot \ell(g)}{g} \to 0$ for $g\to \infty$, the occurrence of these saddle connections is dominated by the occurrence of saddle connections from a simple zero to another simple zero that are used in the volume calculation in \cref{Prop:E}. Hence, the same reasoning as in the principal stratum works to obtain \cref{thm:slow-growing_exceptions}.

\bibliographystyle{alpha}
\bibliography{literature}

\end{document}